\newcommand{\C}{\mathbb{C}}
\newcommand{\F}{\mathbb{F}}
\newcommand{\N}{\mathbb{N}}
\renewcommand{\P}{\mathbb{P}}
\newcommand{\Q}{\mathbb{Q}}
\newcommand{\Z}{\mathbb{Z}}
\newcommand{\calE}{\mathcal{E}}
\newcommand{\calH}{\mathcal{H}}
\newcommand{\calM}{\mathcal{M}}
\newcommand{\calN}{\mathcal{N}}
\newcommand{\calQ}{\mathcal{Q}}
\newcommand{\Wtilde}{\widetilde{W}}
\DeclareMathOperator{\GL}{GL}
\renewcommand{\Im}{\mathrm{Im}}
\DeclareMathOperator{\lcm}{lcm}
\DeclareMathOperator{\new}{new}
\DeclareMathOperator{\old}{old}
\DeclareMathOperator{\SL}{SL}
\DeclareMathOperator{\Stab}{Stab}
\DeclareMathOperator{\Sym}{Sym}
\DeclareMathOperator{\Tr}{Tr}
\numberwithin{equation}{section}
\newtheorem{cor}[equation]{Corollary}
\newtheorem{lem}[equation]{Lemma}
\newtheorem{thm}[equation]{Theorem}
\theoremstyle{remark}
\newtheorem{remark}[equation]{Remark}
\newtheorem{exa}[equation]{Example}
\newcommand{\defi}[1]{\textsf{#1}}
\newcommand{\mat}[4]{\left(\begin{array}{cc} {#1} & {#2} \\ {#3} & {#4} \end{array} \right)}
\newcommand{\Mod}[1]{\ (\mathrm{mod}\ #1)}
\title{A note on the trace formula}
\author{Eran Assaf}
\address{Department of Mathematics, Dartmouth College, 6188 Kemeny Hall, Hanover, NH 03755, USA}
\email{eran.assaf@dartmouth.edu}
\begin{document}

\maketitle

\begin{abstract}
    In this mostly expository note, we prove explicit formulas for the traces of Hecke operators on spaces of cusp forms fixed by Atkin-Lehner involutions, which are suitable for efficient implementation. In addition, we correct a couple of errors in previously published formulas.
\end{abstract}

\section{Introduction}

Recently, a significant statistical phenomenon was discovered \cite{HLOP22} in the average values of the Dirichlet coefficients of elliptic curves. This phenomenon was further studied by \cite{Cowan23}, and in order to facilitate these investigations over large data sets, the need for an efficient implementation arose.

Specifically, for a given conductor $N$, let $\calE_N$ be the set of isogeny classes of elliptic curves defined over $\Q$ of conductor $N$. For an elliptic curve $E \in \calE_N$ and a prime $p$, let $a_p(E) = p + 1 - \#E(\F_p)$, and associate to $E$ its $L$-function $L_E(s)$, a holomorphic function defined for $\Re(s) > 3/2$, whose completion $\Lambda_E(s) = \pi^{-s} \Gamma \left( \frac{s}{2} \right) \Gamma \left( \frac{s+1}{2} \right) L_E(s)$ can be analytically continued, and satisfies a functional equation 
\begin{equation}
\Lambda_E(2-s) = w(E) \Lambda_E(s),
\end{equation}
where $w(E) = \pm 1$ is a sign, which depends only on $E$.
Let $\calE_N^i$ be the subset of (isogeny classes) of elliptic curves such that $w(E) = (-1)^r$ for $r = 0,1$.
The murmuration phenomenon then describes a significant difference in the behaviour of the averages
$$
f_r(p) = \frac{1}{\#\calE_N^r}\sum_{E \in \calE_N^r} a_p(E),
$$
when $r = 0$ as opposed to $r = 1$.

By the modularity theorem, each isogeny class of elliptic curves with conductor $N$ and $w(E) = (-1)^r$ corresponds to a certain cusp form of weight $2$ and level $N$, $f \in S_2(N)^{\new}$ with $T_p(f) = a_p(E) f$ and $W_N(f) = w(E) f$, where $T_p$ is the Hecke operator at $p$ and $W_N$ is the Fricke involution, as defined in Section~\ref{sec: modular forms}. It is therefore sensible to assume that this phenomenon is a shadow of a phenomenon occurring for modular forms.

Explicitly, using the notations of Section~\ref{sec: modular forms}, if we write $S_k(N)^{\new, +}$ for the subspace of cusp forms $f$ of weight $k$ and level $N$ satisfying $W_N(f) = f$, and $S_k(N)^{\new, -}$ for the subspace of cusp forms $f$ satisfying $W_N(f) = -f$, then we are interested in comparing the quantities
$$
\frac{1}{\# S_k(N)^{\new, +}} \Tr(T_p | S_k(N)^{\new, +}), \quad
\frac{1}{\# S_k(N)^{\new, -}} \Tr(T_p | S_k(N)^{\new, -}).
$$

We further note the relations 
$$
\Tr(T_p | S_k(N)^{\new, +}) + \Tr(T_p | S_k(N)^{\new, -}) = 
\Tr(T_p | S_k(N)^{\new}),
$$
and
$$
\Tr(T_p | S_k(N)^{\new, +}) - \Tr(T_p | S_k(N)^{\new, -}) = 
\Tr(T_p \circ W_N | S_k(N)^{\new}).
$$

While many efficient implementations of $\Tr(T_p | S_k(N)^{\new})$ exist, efficient implementation of $\Tr(T_p \circ W_N | S_k(N)^{\new})$ was needed. 

This note explains how to obtain formulas for these and related quantities, suitable for implementation, using the trace formula. It is based on the formula in \cite{P}*{Theorem 4}, and follows it closely. 
However, as \cite{P} does not describe explicitly a trace formula for the new subspace, we follow the methods of \cite{SZ} to obtain one.

\subsection{Results}

Before stating our main result, we introduce some notations. Let $n,N$ be positive integers, and let $Q \parallel N$ be a positive integer which exactly divides $N$, i.e. $Q \mid N$ and $(Q, N/Q) = 1$.

For integers $d', n'$ we write $d'_{N/Q} = \gcd(d, N/Q)$ and $n'_{N/Q} = \gcd(n', N/Q)$. For any $N' | N$, we write $Q' = (N',Q)$, and we let $\calQ$ be the set of primes dividing $Q$.

Let $\mu$ be the M{\"o}bius function, namely the multiplicative function with $\mu(p) = -1$ and $\mu(p^e) = 0$ for all $e \ge 2$ for all primes $p$, and denote by $\sigma_{0,n}(m)$ the number of divisors of $m$ that are prime to $n$.
The set $\calN_{Q,N,n}(d,n')$ is defined as in \eqref{eq: N(d,n) sets}, using \eqref{eq: divisors primr to Nprime} and \eqref{eq: square divisors}, and $\alpha_{\calQ,n}$ is the multiplicative function defined by \eqref{eq: alpha}. 

Our main result is the following formula (Corollary~\ref{cor: main thm}), relating the trace of $T_n \circ W_Q$ on the new subspace, where $T_n$ is the $n$-th Hecke operator and $W_Q$ is the Atkin-Lehner operator, to traces of $T_{n'} \circ W_{Q'}$ on cusp forms of lower levels, where $n', Q'$ range over numbers dividing $n, Q$ respectively.

\begin{thm} 
    Let $n, N$ be positive integers, and let $Q \parallel N$. 
    Denote 
    \begin{align*}
    T_{<n,k} &(Q,N) \\
    &= \sum_{\substack{d | n' \mid n \\ n'_{N/Q} = d_{N/Q}^2 \\ 1 < n'}}
        (n')^{\frac{k}{2}} 
        \frac{\mu(d)}{d}
        \sum_{N'}
        \sigma_{0,n} \left( 
        \frac{N/Q}{N'/Q'}
        \right)
        \Tr \left(T_{\frac{n}{n'}} \circ W_{Q'} \middle |  S_k(N')^{\new} \right),
    \end{align*}
    where $N'$ ranges over the set $\calN_{Q,N,n}(d, n')$ defined in \eqref{eq: N(d,n) sets}.
    Then
    \begin{align*}
        \Tr&(T_n \circ W_Q | S_k(N)^{\new}) \\
        &= \sum_{N' \mid N}
        \alpha_{\calQ,n} \left( \frac{N}{N'} \right)
        \left(
        \Tr \left(T_n \circ W_{Q'} \middle | S_k(N') \right)
        - T_{<n,k}(Q', N')
        \right).
    \end{align*}      
\end{thm}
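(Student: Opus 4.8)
The plan is to pass through the full space via the Atkin--Lehner--Li decomposition, compute the trace of $T_n \circ W_Q$ on each oldform block, and then invert the resulting relation. Write
\[
S_k(N) = \bigoplus_{N' \mid N} \calH_{N'}, \qquad \calH_{N'} = \bigoplus_{d \mid N/N'} \left( S_k(N')^{\new} \right) \big| V_d,
\]
where $V_d$ is the degeneracy map $f(z) \mapsto f(dz)$, suitably normalized. Because $W_Q$ commutes with $T_\ell$ for $\ell \nmid N$ and hence preserves the newform level, both $T_n$ and $W_Q$ act on each block $\calH_{N'}$; thus $\Tr(T_n \circ W_Q \mid S_k(N))$ is the sum over $N' \mid N$ of the block traces $\Tr(T_n \circ W_Q \mid \calH_{N'})$. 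The statement is an inversion of this forward relation, so the core task is to identify each block trace in terms of the new traces $\Tr(T_{n/n'} \circ W_{Q'} \mid S_k(N')^{\new})$.

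For a fixed level $N'$, I would compute the block trace one prime at a time, using multiplicativity of $T_n$, of $W_Q$, and of the degeneracy structure. Three regimes occur. For $p \nmid N$ the operator $T_p$ is the scalar Hecke eigenvalue and $W_Q$ ignores $p$. For $p \mid Q$ the involution $W_Q$ restricts to $W_{Q'}$ on the level-$N'$ newform (with $Q' = (N',Q)$) and interacts with $T_p$ through the standard Atkin--Lehner relations. The delicate case is $p \mid N/Q$: here $W_Q$ does not involve $p$, but the $p$-part of $T_n$ acts on the span of the $V_{p^a}$-images by a non-scalar matrix governed by the Hecke recursion $a_{p^{b+1}} = a_p\, a_{p^b} - p^{k-1} a_{p^{b-1}}$. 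Taking the trace over this span and re-expressing it through newform eigenvalues of $T_{p^c}$ is precisely what forces the square-divisor condition $n'_{N/Q} = d_{N/Q}^2$ and produces the weights $(n')^{k/2}\,\mu(d)/d$; collecting the divisors $d \mid N/N'$ that are prime to $n$ and contribute identically supplies the factor $\sigma_{0,n}(\cdot)$ and pins down the index set $\calN_{Q,N,n}(d,n')$. I expect this local block computation to be the main obstacle.

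The outcome of the previous step is a forward relation of the shape
\begin{align*}
\Tr\!\left(T_n \circ W_{Q'} \mid S_k(N')\right)
&= \sum_{M \mid N'} c\!\left(\tfrac{N'}{M}\right)\Tr\!\left(T_n \circ W_{Q''} \mid S_k(M)^{\new}\right) \\
&\quad + T_{<n,k}(Q',N'),
\end{align*}
valid at every level $N' \mid N$, where $Q'' = (M,Q')$, the kernel $c$ records the oldform multiplicity, and the correction $T_{<n,k}(Q',N')$ gathers exactly the contributions of the lower Hecke indices $T_{n/n'}$ with $n' > 1$ forced by the non-scalar action at primes dividing $N/Q$. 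The final step is M\"obius inversion: since $c$ is multiplicative, so is its Dirichlet inverse, and one checks prime by prime --- splitting the primes dividing $Q$, those dividing $N/Q$, and those dividing $n$ --- that this inverse is the function $\alpha_{\calQ,n}$ of \eqref{eq: alpha}. Applying $\alpha_{\calQ,n}$ and moving the correction term inside the sum yields the claimed formula. Note that the new traces appearing inside $T_{<n,k}$ all carry Hecke index $n/n' < n$, so the identity may equivalently be read as a recursion in $n$ (and in $N$), which is how I would organize the bookkeeping and verify that the coefficients collapse to $\alpha_{\calQ,n}$ and $\sigma_{0,n}$ as stated.
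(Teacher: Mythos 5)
Your proposal follows essentially the same route as the paper: the Atkin--Lehner--Li decomposition of $S_k(N)$ into the blocks $\iota_{N,N'}\left(S_k(N')^{\new}\right)$, a block-trace computation via the commutation of $T_n$ and $W_Q$ with the degeneracy maps (which is exactly where the square condition $n'_{N/Q}=d_{N/Q}^2$, the weights $(n')^{k/2}\mu(d)/d$, and the factor $\sigma_{0,n}$ arise), followed by M\"obius inversion of the multiplicative kernel to produce $\alpha_{\calQ,n}$. The local computation you flag as the main obstacle is carried out in the paper by establishing $W_Q\circ\alpha_d = (\cdot)\,\alpha_{d'}\circ W_{Q'}$ and $T_n\circ\alpha_d=(\cdot)\sum\mu(d')\,\alpha_{\cdot}\circ T_{\cdot}$ explicitly and then identifying which basis vectors $\alpha_D(f_j)$ contribute diagonally, but the overall strategy is the same as yours.
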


Since \cite{P}*{Theorem 4} gives a formula for $\Tr(T_n \circ W_Q | S_k(N))$, combining it with our result provides a formula for the trace on the new subspace. For the reader's convenience we introduce Popa's notations and results, with \cite{P}*{Theorem 4} appearing as Theorem~\ref{thm: Popa trace formula}.

The main tool used in this work is an analysis of the commutation relations between Hecke operators and level-raising operators, which is described in Section~\ref{sec: level raising}. This allows one to relate the traces on a space of cusp forms with traces of newforms of lower levels, which is the content of Section~\ref{sec: trace formulas}.
The final ingredient is the M{\"o}bius inversion formula, allowing us to invert the relation and describe the trace on the space of newforms as a function of traces on spaces of cusp forms of lower levels.

The formula has been implemented in Magma~\cite{Magma} and verified for many possible inputs using the Modular Symbols package of Magma. The case $n = p$ and $Q = N$ (Corollary~\ref{cor: T_p W_N new}) has also been implemented in Pari/GP~\cite{Pari} for efficiency reasons, and has been used to construct the plots of murmurations in modular forms that are found in \cite{Drew}. The implementation is available at \cite{repo}.

In addition, while working on this project and implementing the formulas, errors were discovered in one of the formulas in \cite{SZ} and in one of the Lemmas in \cite{P}. Their resolution is described in Appendix~\ref{sec: fixing SZ}, and we present here the corrected version (Theorem~\ref{thm: SZ fix} and Lemma~\ref{lem: Popa fix}). 

\begin{thm}
Assume $(l,m) = 1$ and $n \mid m$ with $\left(n, \frac{m}{n} \right) = 1$. Let $T_{l}$ and $W_n$ denote the $l$-th Hecke operator and $n$-th Atkin-Lehner involution on $M_{2k-2}(m)$ as in \cite{SZ}. Then
    $$
    \Tr(T_{l} \circ W_n, S_{2k-2}(m)) = 
    \sum_{\substack{m' \mid m \\ m/m' \text{ squarefree}}}
    \mu \left( \frac{n}{(n,m')} \right) s_{k,m'}(l,(n,m')),
    $$
    where $s_{k,m}(l,n)$ is the quantity described in \cite{SZ}*{Theorem 1}.
\end{thm}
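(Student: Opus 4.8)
The plan is to expand both sides in a common basis of new-subspace traces and then match coefficients by a divisor-sum identity. Since $n \mid m$ with $(n, m/n) = 1$, the operator $W_n$ is a genuine Atkin--Lehner involution, and for every $m' \mid m$ one checks $(n,m') \parallel m'$, so each $W_{(n,m')}$ on $S_{2k-2}(m')$ is again a genuine involution and the right-hand side is well defined. Write $N(m'') = \Tr\bigl(T_l \circ W_{(n,m'')}, S_{2k-2}(m'')^{\new}\bigr)$ for $m'' \mid m$; these are the atoms into which everything will decompose.

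The first step is to read off from \cite{SZ}*{Theorem 1} what the quantity $s_{k,m}(l,n)$ actually computes. I expect the analysis of the trace-formula derivation there to show that $s_{k,m}(l,n) = \sum_{m'' \mid m} N(m'')$, i.e.\ that it records each newform level exactly once (with the Atkin--Lehner index contracted to $(n,m'')$), rather than with the degeneracy multiplicities that occur inside the honest space $S_{2k-2}(m)$. Granting this, and using $\bigl((n,m'),m''\bigr) = (n,m'')$ for $m'' \mid m'$, every building block on the right-hand side expands as $s_{k,m'}(l,(n,m')) = \sum_{m'' \mid m'} N(m'')$. Pinning down this identification, and matching the normalizations of $T_l$ and $W_n$ in \cite{SZ} to the ones used here, is the conceptual heart of the argument and the point at which the error in \cite{SZ} is isolated.

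The second step computes the honest trace through the newform decomposition
\[
S_{2k-2}(m) = \bigoplus_{m'' \mid m}\ \bigoplus_{d \mid m/m''} \iota_d\bigl(S_{2k-2}(m'')^{\new}\bigr),
\]
where $\iota_d \colon f(z) \mapsto f(dz)$ is the level-raising map of Section~\ref{sec: level raising}. Because $(l,m)=1$, the operator $T_l$ commutes with every $\iota_d$ and acts by a single scalar on each block, while $W_n$ permutes the degeneracy copies $\{\iota_d\}$ within a block, up to explicit scalars, and restricts on the surviving copies to $W_{(n,m'')}$. Taking traces then leaves a contribution $c(m'',m)\,N(m'')$ from each block, where $c(m'',m)$ records, with the appropriate scalars, the copies fixed by the permutation induced by $W_{n/(n,m'')}$. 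The exact determination of this permutation and of $c(m'',m)$ is the crux.

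It then remains to compare
\[
\sum_{\substack{m' \mid m \\ m/m' \text{ squarefree}}} \mu\!\left(\frac{n}{(n,m')}\right) \sum_{m'' \mid m'} N(m'')
\quad\text{and}\quad
\sum_{m'' \mid m} c(m'',m)\,N(m''),
\]
which, since both are linear in the independent quantities $N(m'')$, reduces to the purely arithmetic identity $c(m'',m) = \sum \mu\bigl(n/(n,m')\bigr)$, the sum taken over $m'' \mid m' \mid m$ with $m/m'$ squarefree. I would verify this by M\"obius inversion exactly as in the proof of Corollary~\ref{cor: main thm}; the restriction to squarefree $m/m'$ and the sign $\mu\bigl(n/(n,m')\bigr)$ are forced by it. The main obstacle I anticipate is the second step: correctly tracking how $W_n$ intertwines with the level-raising maps on each oldform block --- including the scalars depending on the weight $2k-2$ --- so that the multiplicities $c(m'',m)$ come out right and the divisor-sum identity closes.
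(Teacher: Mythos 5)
There is a genuine gap, and it sits exactly where you flag ``the conceptual heart'': your guessed identification $s_{k,m'}(l,(n,m')) = \sum_{m'' \mid m'} N(m'')$ is incorrect. The paper does not unwind \cite{SZ}*{Theorem 1} from scratch; it takes as input the (correct) newspace formula of \cite{SZ}, recalled as \eqref{eq: SZ new}, which states $N(m') = \sum_{m'' \mid m'} \alpha(m'/m'')\, s_{k,m''}(l,(n,m''))$ with $\alpha(p)=\alpha(p^2)=-1$, $\alpha(p^3)=1$, $\alpha(p^s)=0$ for $s\ge 4$. Inverting this in the Dirichlet ring gives $s_{k,m'}(l,(n,m')) = \sum_{m''\mid m'} \alpha^{-1}(m'/m'')\,N(m'')$ with $\alpha^{-1}(p^j) = \lfloor j/2\rfloor + 1$ (the number of square divisors of $p^j$), not $1$: each newform level enters $s_{k,m'}$ with a nontrivial multiplicity. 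With your assumed expansion the coefficient comparison cannot close. Already at $n=1$ your right-hand side becomes $\sum_{m''\mid m} 2^{\omega(m/m'')} N(m'')$, where $\omega$ counts distinct prime factors, whereas the correct oldform multiplicity is $\sigma_0(m/m'')$; these differ as soon as $p^2 \mid m/m''$. The two other load-bearing steps --- the determination of how $W_n$ permutes the degeneracy copies (your $c(m'',m)$) and the closing divisor-sum identity --- are likewise stated as intentions rather than carried out.

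For contrast, the paper's route is to quote both needed inputs directly from \cite{SZ}: the decomposition \eqref{eq: SZ 5} of the full trace into newspace traces, which is precisely your second step and yields $c(m'',m) = \delta_{n/(n,m'')=\square}\cdot\sigma_0\!\left(\tfrac{m/n}{m''/(n,m'')}\right)$, and the newspace formula \eqref{eq: SZ new}. Substituting one into the other produces a triple sum that is collapsed using the two elementary identities $\sum_{d\mid n,\ n/d=\square}\alpha(d)=\mu(n)$ and $\sum_{d \mid n}\alpha(d)\sigma_0(n/d) = 1$ if $n$ is squarefree and $0$ otherwise. Your overall architecture (expand both sides in the atoms $N(m'')$ and match coefficients) is the same in spirit, and it would become equivalent to the paper's argument if you replaced your guessed expansion of $s_{k,m'}$ by the correct Dirichlet inverse of $\alpha$; as written, however, the central identity is false and the steps you call the heart and the crux are missing.
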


\subsection{Acknowledgements}
The author would like to thank Alex Cowan and Andrew  Sutherland for motivating this project, and Steve Fan, Alexandru Popa and Andrew Sutherland for helpful comments and discussions. The author was supported by a Simons Collaboration grant (550029, to Voight). 

\section{Modular Forms} \label{sec: modular forms}
In this section we review some definitions and set notations and conventions for the spaces of modular forms we will be working with. We are mostly following the notations in \cite{DS}, and the reader is invited to consult it for further reference.

The \defi{upper half plane} is 
\begin{equation}
\calH = \{ \tau \in \C : \Im(\tau) > 0 \}.
\end{equation}

Denote by $\GL_2^{+}(\Q)$ the group of $2 \times 2$ matrices with rational entries and positive determinant, namely
\begin{equation}
    \GL_2^+(\Q) = \left \{
    \mat abcd : \ a,b,c,d \in \Q, \ ad - bc > 0
    \right \}.
\end{equation}

Then $\GL_2^+(\Q)$ acts on $\calH$ via M{\"o}bius transformations, explicitly 
\begin{equation} \label{eq: mobius action}
    \alpha = \mat abcd : \tau \mapsto \alpha \tau = \frac{a \tau + b}{c \tau + d}.
\end{equation}

For any matrix $\alpha \in \GL_2^+(\Q)$ as above, and $\tau \in \calH$ we write $j(\alpha, \tau) = c \tau + d$ for the factor of automorphy.

For any integer $k$, the action \eqref{eq: mobius action} induces a right action on the space of holomorphic functions $f : \calH \to \C$, given by 
\begin{equation} \label{eq: GL2 weight-k action}
(f [\alpha]_k )(\tau) = \det(\alpha)^{k-1} j(\alpha, \tau)^{-k} f(\alpha \tau).
\end{equation}

The \defi{modular group} is the group $2 \times 2$ matrices with integral entries and determinant $1$, 
\begin{equation}
\SL_2(\Z) = \left \{
\mat abcd : \ a,b,c,d \in \Z, \ ad-bc = 1
\right \},
\end{equation}
and we denote by $\Gamma_0(N)$ its subgroup of matrices upper-triangular modulo $N$,
\begin{equation}
    \Gamma_0(N) = \left \{ 
    \mat abcd \in \SL_2(\Z) : \ c \in N \Z
    \right \}.
\end{equation}

If $f: \calH \to \C$ is holomorphic and $\Z$-periodic, it admits a Laurent expansion 
\begin{equation} \label{eq: q-expansion}
    f(\tau) = \sum_{n = n_0}^{\infty} a_n q^n, \quad q = e^{2 \pi i \tau}.
\end{equation}
We say $f$ \defi{is holomorphic} (resp. \defi{vanishes}) at $\infty$ if $n_0 \ge 0$ (resp. $n_0 \ge 1$).

A holomorphic function $f : \calH \to \C$ is a \defi{modular form of weight $k$ and level $N$} if $f[\gamma]_k = f$ for all $\gamma \in \Gamma_0(N)$ and $f[\alpha]_k$ is holomorphic at $\infty$ for all $\alpha \in \SL_2(\Z)$. 
If $f[\alpha]_k$ vanishes at $\infty$ for all $\alpha \in \SL_2(\Z)$, $f$ is a \defi{cusp form}.

Denote by $S_k(N)$ the space of cusp forms of weight $k$ and level $N$.

The space $S_k(N)$ admits an inner product $\langle , \rangle : S_k(N) \times S_k(N) \to \C$, the \defi{Petersson inner product}, given by
\begin{equation} \label{eq: Petersson inner product}
    \langle f, g \rangle 
    = \frac{1}{V_{N}} \int_{\Gamma_0(N) \backslash \calH}
    f(\tau) \overline{g(\tau)} \Im(\tau)^k d \mu(\tau),
\end{equation}
where 
\begin{equation} \label{eq: hyperbolic measure}
d \mu (\tau) = \frac{dx dy}{y^2}, \quad \tau = x + iy \in \calH
\end{equation}
is the \defi{hyperbolic measure} on the upper half plane, and $V_N = \int_{\Gamma_0(N) \backslash \calH} d \mu(\tau)$.

Let $d \mid N$ be a divisor of $N$. Then the element 
\begin{equation} \label{eq: level raising}
    \alpha_d = \mat d 0 0 1
\end{equation}
gives rise to an injective linear map $[\alpha_d]_k : S_k(Nd^{-1}) \to S_k(N)$, hence to a map $i_d : S_k(Nd^{-1})^2 \to S_k(N)$ given by
\begin{equation} \label{eq: level raising total}
    i_d(f,g) = f + g[\alpha_d]_k.
\end{equation}

When the weight $k$ is clear from the context, we will abuse notation and write also $\alpha_d : S_k(Nd^{-1}) \to S_k(N)$ for $[\alpha_d]_k$.

The subspace of \defi{oldforms at level $N$} is 
\begin{equation} \label{eq: old subspace}
    S_k(N)^{\old} = \sum_{\substack{p \mid N \\ p \text{ prime}}} 
    i_p \left( S_k(Np^{-1})^2
    \right),
\end{equation}
and the subspace of \defi{newforms at level $N$} is the orthogonal complement with respect to the Petersson inner product,
\begin{equation} \label{eq: newspace}
    S_k(N)^{\new} = \left( S_K(N)^{\old} \right)^{\perp}.
\end{equation}


For any integer $n$, denote by
$$
\Delta_n = \left \{ \delta = \mat a b c d \in M_2(\Z) : \det \delta = n, c \in N \Z, (a, N) = 1 \right \}
$$
the set of $2 \times 2$ matrices with integral entries of determinant $n$ which are upper triangular modulo $N$. Let $T_n : S_k(N) \to S_k(N)$ be the $n$-th \defi{Hecke operator}, defined as  
\begin{equation} \label{eq: Hecke operator double coset}
  T_n(f) = \sum_{\beta \in \Gamma_0(N) \backslash \Delta_n} f [\beta]_k.
\end{equation}

For an integer $Q \parallel N$ (i.e. $Q \mid N$ and $(Q, \frac{N}{Q}) = 1$), we let $\Wtilde_Q : S_k(N) \to S_k(N)$ be the $Q$-th Atkin-Lehner operator, defined as 
\begin{equation} \label{eq: Atkin-Lehner operator}
    \Wtilde_Q(f) = f \left[ \mat {Qx}{y}{Nz}{Qw} \right]_k, \quad x,y,z,w \in \Z, \ xwQ - yz(N/Q) = 1,
\end{equation}
and let $W_Q = Q^{1-\frac{k}{2}} \Wtilde_Q$ be the Atkin-Lehner involution.

\section{Level raising and Hecke operators} \label{sec: level raising}

Since we are interested in computing traces of linear operators on the space $S_k(N)^{\new}$, we need to understand how these operators behave with respect to the level-raising operators.
We begin with the Atkin-Lehner operator, as this is quite straight-forward.

\begin{lem} \label{lem: AL and level-raising}
    Let $N' \mid N$, let $d \mid \frac{N}{N'}$ and let $Q \parallel N$. 
    Write $Q' = (Q, N')$ and $d' = \left(\frac{N}{dN'}, Q \right) \cdot \left( d, \frac{N}{Q} \right)$.
    We have 
    \begin{equation} \label{eq: AL with level raising}
    d^{1-\frac{k}{2}} \cdot W_Q \circ \alpha_d = 
    (d')^{1-\frac{k}{2}} \cdot \alpha_{d'} \circ W_{Q'}
    \end{equation}
\end{lem}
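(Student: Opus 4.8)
The plan is to unwind both composites into the weight-$k$ slash action and reduce the statement to an identity of integral matrices modulo $\Gamma_0$. Since $[\,\cdot\,]_k$ is a right action and $W_Q=Q^{1-\frac{k}{2}}\Wtilde_Q$, for a form $f$ of level $N'$ I compute
\[
(W_Q\circ\alpha_d)(f)=Q^{1-\frac{k}{2}}\,f[\alpha_d\Wtilde_Q]_k,\qquad (\alpha_{d'}\circ W_{Q'})(f)=(Q')^{1-\frac{k}{2}}\,f[\Wtilde_{Q'}\alpha_{d'}]_k,
\]
where $\Wtilde_Q$ is the Atkin--Lehner matrix at level $N$ and $\Wtilde_{Q'}$ the one at level $N'$. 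Both composites are regarded as maps $S_k(N')\to S_k(N)$, the image of $\alpha_d$ (resp.\ $\alpha_{d'}$) sitting inside $S_k(N)$ as a space of oldforms. Writing $A=\alpha_d\Wtilde_Q$ and $B=\Wtilde_{Q'}\alpha_{d'}$, and using that the induced operators do not depend on the chosen Atkin--Lehner representatives, the lemma becomes a comparison of $[A]_k$ and $[B]_k$.

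The key reduction is that left multiplication by $\Gamma_0(N')$ and right multiplication by $\Gamma_0(N)$ leave the induced map $S_k(N')\to S_k(N)$ unchanged (the source is $\Gamma_0(N')$-invariant and the image lies in $S_k(N)$), while a central scalar matrix $cI$ contributes the factor $[cI]_k=c^{k-2}$. I therefore aim to produce $\gamma\in\Gamma_0(N')$ and $\gamma'\in\Gamma_0(N)$ with
\[
A=\gamma\,(cI)\,B\,\gamma',\qquad c=d_Q:=\prod_{p\mid Q}p^{\,v_p(d)},
\]
where $v_p$ is the $p$-adic valuation and $d_Q$ is the largest divisor of $d$ supported on the primes of $Q$. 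Granting this, $[A]_k=d_Q^{\,k-2}[B]_k$, and the lemma reduces to the scalar identity $d^{1-\frac{k}{2}}Q^{1-\frac{k}{2}}d_Q^{\,k-2}=(d')^{1-\frac{k}{2}}(Q')^{1-\frac{k}{2}}$.

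The scalar identity is routine and I verify it prime by prime. Since $Q\parallel N$, every prime dividing $Q$ divides $N$ to full multiplicity, so for $p\mid Q$ one has $v_p(Q')=v_p(N')$, $v_p(d')=v_p(N/(dN'))$ and $v_p(d_Q)=v_p(d)$, whereas for $p\nmid Q$ every $Q$-contribution vanishes and $v_p(d')=v_p(d)$. Substituting these makes the exponent of each prime on the two sides agree for all $k$. Independently, $\det A=dQ$ and $\det B=Q'd'$, and the same valuation computation gives $dQ=d_Q^2\,Q'd'$; since $\gamma,\gamma'$ have determinant $1$, any factorization $A=\gamma(cI)B\gamma'$ forces $c^2=d_Q^2$, pinning down $c=d_Q$.

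The crux is the matrix identity $A=\gamma\,(d_QI)\,B\,\gamma'$. Because Hecke and Atkin--Lehner operators attached to coprime moduli commute and multiply, I reduce to the case where $N$ is a prime power $p^a$ and treat the two cases $p\mid Q$ (so $v_p(Q)=a$) and $p\nmid Q$ separately; in each case I choose convenient representatives for $\Wtilde_Q,\Wtilde_{Q'}$ and exhibit the $\Gamma_0$-factors explicitly, matching elementary divisors and the divisibility of the lower-left entry by the level. The case $p\nmid Q$ is immediate, since $d_Q$ is trivial there and $A$, $B$ already coincide up to $\Gamma_0$. The real work, and the main obstacle, is the case $p\mid Q$: here $Q'$ drops from $p^a$ to $p^{v_p(N')}$ while $d'$ must absorb the complementary factor $p^{\,a-v_p(d)-v_p(N')}$, and the determinant discrepancy $d_Q^2$ between $A$ and $B$ has to be reconciled precisely by the central scalar $d_Q$; verifying that with these valuations $A$ and $d_QB$ genuinely lie in a single $\Gamma_0(N')$--$\Gamma_0(N)$ double coset is where the computation must be carried out with care.
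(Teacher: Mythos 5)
Your framework is the same as the paper's: unwind both composites into a single slash operator, pull out a central scalar, and compare matrices. Your bookkeeping is all correct --- the scalar is indeed $c=(d,Q)=d_Q$, the determinant identity $dQ=d_Q^2\,Q'd'$ holds (it is \eqref{eq: d' and squares} in the paper), and your prime-by-prime verification of $d^{1-\frac{k}{2}}Q^{1-\frac{k}{2}}d_Q^{\,k-2}=(d')^{1-\frac{k}{2}}(Q')^{1-\frac{k}{2}}$ checks out, as does the reduction of the lemma to the membership $\alpha_d\Wtilde_Q\in\Gamma_0(N')\,(d_QI)\,\Wtilde_{Q'}\alpha_{d'}\,\Gamma_0(N)$. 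But that membership is the entire content of the lemma, and you have not established it: you yourself flag it as ``the real work'' and ``the main obstacle.'' Matching determinants (and even elementary divisors) does not place two integral matrices in the same $\Gamma_0(N')$--$\Gamma_0(N)$ double coset, so as written the proof has a genuine gap precisely at its crux. The proposed detour through prime powers also adds overhead that would itself need justification (that $W_Q$ and $\alpha_d$ factor into commuting local pieces compatibly with the coset decomposition), and in the end the local case $p\mid Q$ is no easier than the global statement.

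The gap is closed by one direct computation, with no $\gamma,\gamma'$ and no localization: one only needs the freedom in choosing the Atkin--Lehner representative. With $\Wtilde_Q$ represented by $\mat{Qx}{y}{Nz}{Qw}$, one has
\begin{equation*}
\alpha_d\Wtilde_Q=\mat{dQx}{dy}{Nz}{Qw}=(d,Q)\cdot M\cdot\mat{d'}{0}{0}{1},
\qquad
M=\mat{Q'(d,Q)x}{\bigl(d,\tfrac{N}{Q}\bigr)y}{\tfrac{N}{(d,Q)d'}z}{Q'\bigl(Q,\tfrac{N}{dN'}\bigr)w},
\end{equation*}
where the identity \eqref{eq: d' and squares} is what makes the entries of $M$ integral and shows $\det M=Q'$; one then checks that the lower-left entry of $M$ is divisible by $N'$ (equivalently $\lcm(Q,N')\mid N$) and the diagonal by $Q'$, so $M$ is a legitimate representative of $\Wtilde_{Q'}$ on $S_k(N')$. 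This single factorization is exactly the double-coset statement you postponed, and once it is in place your scalar bookkeeping finishes the proof as in the paper.
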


\begin{proof}
    First, let us note that as both $N' \mid N$ and $Q \mid N$, we have 
    $$
    \frac{QN'}{Q'} = \frac{QN'}{(Q,N')} =  \lcm(Q,N') \mid N,
    $$ 
    hence $\frac{N'}{Q'} \mid \frac{N}{Q}$ so that $(Q',\frac{N'}{Q'}) \mid (Q, \frac{N}{Q}) = 1$, establishing the existence of $W_{Q'}$. Since $d' \mid \frac{N}{dN'} \cdot d = \frac{N}{N'}$, $\alpha_{d'}$ is also well-defined.
    
    Let $x,y,z,w \in \Z$ be such that $xwQ - yz\frac{N}{Q} = 1$.
    Let $f \in S_k(N')$. Then
\begin{align*}
    \Wtilde_Q \circ \alpha_d (f) 
    &= f 
    \left [
    \mat d 0 0 1 
    \right ]_k 
    \left [ 
    \mat {Qx}{y}{Nz}{Qw}
    \right ]_k 
    = f \left [
    \mat {dQx}{dy}{Nz}{Qw}
    \right ]_k \\
    &= (d,Q)^{k-2}f \left [
    \mat {\frac{dQ}{(d,Q)}x}{\frac{d}{(d,Q)}y}{\frac{N}{(d,Q)}z}{\frac{Q}{(d,Q)}w}
    \right ]_k.
\end{align*}
On the other hand, from $Q \parallel N$ it follows that
\begin{equation} \label{eq: d' and squares}
(d,Q)^2 d' Q' = \left( (d,Q)\left(d,\frac{N}{Q}\right) \right) \cdot \left( (d,Q)\left(\frac{N}{dN'}, Q \right) \right) = d Q,
\end{equation}
hence
$$
\mat {\frac{dQ}{(d,Q)}x}{\frac{d}{(d,Q)}y}{\frac{N}{(d,Q)}z}{\frac{Q}{(d,Q)}w}
= 
\mat {d'Q'(d,Q)x}{(d,\frac{N}{Q})y}{\frac{N}{(d,Q)}z}{Q'(Q, \frac{N}{dN'})w},
$$
leading to
\begin{align*}
\Wtilde_Q \circ \alpha_d (f) 
    &= (d,Q)^{k-2} f \left [ 
    \mat {Q'(d,Q)x}{(d,\frac{N}{Q})y}{\frac{N}{(d,Q)d'}z} {Q'(Q, \frac{N}{dN'})w}
    \right ]_k 
    \left [
    \mat {d'} 0 0 1 
    \right ]_k \\
    &= (d,Q)^{k-2} \cdot \alpha_{d'} \circ \Wtilde_{Q'} (f).
    \end{align*}
    Therefore, we obtain
    \begin{align*}
    d^{1-\frac{k}{2}} \cdot W_Q \circ \alpha_d
    &= (dQ)^{1-\frac{k}{2}} \cdot \Wtilde_Q \circ \alpha_d
    = \left(\frac{dQ}{(d,Q)^2}\right)^{1-\frac{k}{2}} \cdot \alpha_{d'} \circ \Wtilde_{Q'} \\
    &= \left(\frac{dQ}{Q'(d,Q)^2}\right)^{1-\frac{k}{2}} \cdot \alpha_{d'} \circ W_{Q'}
    = (d')^{1-\frac{k}{2}} \cdot \alpha_{d'} \circ W_{Q'}. \qedhere
    \end{align*}
\end{proof}

\begin{cor} \label{cor: Fricke and level-raising}
    Let $N' \mid N$, and let $d \mid \frac{N}{N'}$. Write $d' = \frac{N}{N'd}$. 
    Then 
    \begin{equation} \label{eq: Fricke with level raising}
    d^{1-\frac{k}{2}} \cdot W_N \circ \alpha_d = 
    (d')^{1-\frac{k}{2}} \cdot \alpha_{d'} \circ W_{N'}
    \end{equation}
\end{cor}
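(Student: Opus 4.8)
The plan is to obtain this corollary as the special case $Q = N$ of Lemma~\ref{lem: AL and level-raising}. First I would check that $Q = N$ is a legitimate choice: since $(N, N/N) = (N,1) = 1$, we indeed have $N \parallel N$, so the lemma applies verbatim with $Q = N$. The whole statement should then fall out by substitution, once the two derived quantities $Q'$ and $d'$ of the lemma are specialized.

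Next I would specialize those quantities. For the Atkin-Lehner index, the lemma sets $Q' = (Q, N') = (N, N')$, and because $N' \mid N$ this collapses to $Q' = N'$, so that $W_{Q'} = W_{N'}$. For the level-raising index, the lemma defines $d' = \left(\frac{N}{dN'}, Q\right) \cdot \left(d, \frac{N}{Q}\right)$. With $Q = N$ the second factor is $\left(d, \frac{N}{N}\right) = (d,1) = 1$. For the first factor, the hypothesis $d \mid \frac{N}{N'}$ guarantees $dN' \mid N$, so $\frac{N}{dN'}$ is an integer dividing $N$, and hence $\left(\frac{N}{dN'}, N\right) = \frac{N}{dN'}$. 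This yields $d' = \frac{N}{N'd}$, exactly matching the definition in the statement of the corollary.

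Substituting $Q' = N'$ and $d' = \frac{N}{N'd}$ into the conclusion \eqref{eq: AL with level raising} of the lemma, and recalling that $W_N$ is the Fricke involution, produces precisely the asserted identity \eqref{eq: Fricke with level raising}. I do not expect any genuine obstacle, as the corollary is a direct specialization rather than an independent result; the only point requiring a moment of care is the gcd computation for $d'$, namely the observation that $dN' \mid N$ forces $\frac{N}{dN'}$ to divide $N$, so that taking its gcd with $N$ simply returns the number itself.
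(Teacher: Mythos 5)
Your proof is correct and is exactly the intended derivation: the paper states this as an immediate corollary of Lemma~\ref{lem: AL and level-raising} obtained by setting $Q = N$, and your verification that $Q' = (N,N') = N'$ and $d' = \left(\frac{N}{dN'}, N\right)\cdot(d,1) = \frac{N}{N'd}$ (using $dN' \mid N$) is precisely the specialization required. No gaps.
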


\begin{remark}
    This also appears in \cite{AL78}*{Prop. 1.5}. We include it here since our normalization differs. The difference is due to the fact that $W_N$ is normalized with a power of $k/2$ and $\alpha_d$ is normalized with a power of $k-1$, which is convenient for the commutation relations with the Hecke operators, see below.
\end{remark}

We proceed to describe the relation between level-raising and the Hecke operators. For that, we need first to introduce a standard result, which can be obtained by considering explicit coset representatives.

\begin{lem}[\cite{DS}*{Ex.5.6.3}] \label{lem: T_p alpha_p base case}
    Let $N' \mid N$ and let $p$ be a prime.
    \begin{enumerate}
        \item If $p \nmid \frac{N}{N'}$ and $d \mid \frac{N}{N'}$, then $T_p \circ \alpha_d = \alpha_d \circ T_p$.
        \item If $N = N'p$, then $T_p \circ \alpha_p = p^{k-1} \alpha_1$.
        \item If $N = N'p$, then $T_p \circ \alpha_1 = \alpha_1 \circ T_p -  \delta_{p \nmid N'} \cdot \alpha_p$.
    \end{enumerate}
\end{lem}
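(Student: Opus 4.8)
The plan is to compute both sides directly from explicit coset representatives for $\Gamma_0(M) \backslash \Delta_p$, using the cocycle relation $[\beta]_k [\gamma]_k = [\beta\gamma]_k$ for the weight-$k$ action. The one fact I would isolate first is how the representatives depend on whether $p$ divides the level: for a level $M$ with $p \nmid M$ one may take the $p+1$ matrices $\mat{1}{j}{0}{p}$ with $0 \le j < p$ together with $\mat{p}{0}{0}{1}$, whereas if $p \mid M$ the last matrix violates the condition $(a,M) = 1$ built into $\Delta_p$ and only the $p$ matrices $\mat{1}{j}{0}{p}$ survive. The whole lemma turns on tracking this single extra representative, so I would make this explicit before touching any of the three cases.

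For part (1), the hypotheses $d \mid \frac{N}{N'}$ and $p \nmid \frac{N}{N'}$ give $p \nmid d$, whence $p \mid N \iff p \mid M$ for $M = N/d$; thus the two levels carry representative lists of identical shape. Writing $\alpha_d \circ T_p(f) = \sum_\beta f[\beta\alpha_d]_k$ over level-$M$ representatives $\beta$ and $T_p \circ \alpha_d(f) = \sum_\gamma f[\alpha_d\gamma]_k$ over level-$N$ representatives $\gamma$, I would match the families modulo left multiplication by $\Gamma_0(N)$. The computation $\mat{1}{j}{0}{p}\alpha_d = \mat{d}{j}{0}{p}$ versus $\alpha_d \mat{1}{j'}{0}{p} = \mat{d}{dj'}{0}{p}$ shows these lie in one $\Gamma_0(N)$-coset exactly when $j \equiv dj' \Mod p$, and since $p \nmid d$ this defines a bijection on residues; the remaining representative, when present, matches identically since $\mat{p}{0}{0}{1}\alpha_d = \alpha_d\mat{p}{0}{0}{1}$. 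This gives the asserted commutation.

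Parts (2) and (3) both start from $N = N'p$, so $p \mid N$ and $T_p$ at level $N$ ranges only over $\mat{1}{j}{0}{p}$. For (2), $\alpha_p\mat{1}{j}{0}{p} = \mat{p}{pj}{0}{p} = pI \cdot \mat{1}{j}{0}{1}$; since $f \in S_k(N')$ is $\Z$-periodic and the scalar matrix $pI$ acts by $p^{k-2}$ in weight $k$, each of the $p$ terms contributes $p^{k-2}f$, summing to $p^{k-1}f = p^{k-1}\alpha_1(f)$. For (3), with $\alpha_1$ the inclusion $S_k(N') \hookrightarrow S_k(N)$, the representative $\mat{p}{0}{0}{1}$ is absent at level $N$ but present at level $N'$ precisely when $p \nmid N'$; the contributions of the common matrices $\mat{1}{j}{0}{p}$ agree on both sides, so $T_p \circ \alpha_1(f)$ and $\alpha_1 \circ T_p(f)$ differ exactly by $f[\mat{p}{0}{0}{1}]_k = \alpha_p(f)$ when $p \nmid N'$ and not at all otherwise, which is the correction term $\delta_{p \nmid N'} \cdot \alpha_p$.

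The genuinely delicate point throughout is the condition $(a,M) = 1$ in the definition of $\Delta_p$, which silently removes $\mat{p}{0}{0}{1}$ as soon as $p$ divides the level; the entire asymmetry between the three cases, and in particular the correction term in (3), is governed by whether this matrix qualifies as a representative at each of the relevant levels. I would also keep careful track of the factors $\det^{k-1}$ and $j(\cdot,\tau)^{-k}$, as these are exactly what convert the scalar matrix $pI$ into the power $p^{k-1}$ appearing in (2).
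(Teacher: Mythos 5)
Your proof is correct and follows exactly the route the paper indicates: the paper gives no argument of its own for this lemma, citing \cite{DS}*{Ex.\ 5.6.3} with the remark that it ``can be obtained by considering explicit coset representatives,'' which is precisely what you carry out, including the key observation that the condition $(a,N)=1$ in $\Delta_p$ removes the representative $\left(\begin{smallmatrix}p&0\\0&1\end{smallmatrix}\right)$ when $p$ divides the level. The coset matching in part (1) via $j\equiv dj'\ (\mathrm{mod}\ p)$ and the scalar-matrix computation giving $p^{k-2}$ per term in part (2) are both accurate under the paper's normalization \eqref{eq: GL2 weight-k action}.
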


Building on these base cases, we are ready to formulate the commutation relation.

\begin{lem} \label{lem: Hecke operator and level raising at primes}
    Let $p$ be a prime such that $p \mid \frac{N}{N'}$, and let $d \mid \frac{N}{N'}$. Then
    \begin{equation} \label{eq: Hecke with level raising}
        T_p \circ \alpha_d = \begin{cases}
            p^{k-1} \alpha_{\frac{d}{p}} & p \mid d \\
            \alpha_d \circ T_p - \delta_{p \nmid N'} \cdot \alpha_{pd}  & p \nmid d
        \end{cases}
    \end{equation}
\end{lem}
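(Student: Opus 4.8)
The plan is to prove \eqref{eq: Hecke with level raising} by induction on $\Omega(N/N')$, the number of prime factors of $N/N'$ counted with multiplicity (note $\Omega(N/N') \ge 1$ since $p \mid N/N'$). Throughout I regard each $\alpha_m$ as the map $S_k(N') \to S_k(N)$ that raises the level by $m$ and then includes $S_k(N'm) \hookrightarrow S_k(N)$, so that every operator appearing in \eqref{eq: Hecke with level raising} shares the domain $S_k(N')$ and codomain $S_k(N)$; I also adopt the convention that $\alpha_m$ vanishes when $m \nmid N/N'$, so spurious terms such as $\alpha_{pd}$ are automatically absent unless $p \mid N/N'$. The base case $\Omega(N/N') = 1$ forces $N/N' = p$ and $d \in \{1,p\}$, and is then exactly parts (2) and (3) of Lemma~\ref{lem: T_p alpha_p base case}.

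For the inductive step I treat the two cases of \eqref{eq: Hecke with level raising} separately, in each case peeling off a single prime so as to land in a base-case configuration, where the ambient level equals the sublevel times one prime. When $p \mid d$, I factor $\alpha_d = \alpha_{d/p} \circ \alpha_p$ through the intermediate level $N'p$ and write $T_p \circ \alpha_d = (T_p \circ \alpha_{d/p}) \circ \alpha_p$. The inner operator $T_p \circ \alpha_{d/p}$, viewed as raising from $S_k(N'p)$, is governed by the inductive hypothesis when $p \mid N/(N'p)$ and by Lemma~\ref{lem: T_p alpha_p base case}(1) when $p \nmid N/(N'p)$; in either case any correction term carries the factor $\delta_{p \nmid N'p}$, which vanishes since $p \mid N'p$. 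Applying Lemma~\ref{lem: T_p alpha_p base case}(2) to the remaining $T_p \circ \alpha_p$ at level $N'p = N' \cdot p$ produces the factor $p^{k-1}$, and multiplicativity $\alpha_e \circ \alpha_p = \alpha_{ep}$ (together with $\iota \circ \alpha_m = \alpha_m$ in the sub-case $p \nmid d/p$) collapses the whole expression to $p^{k-1}\alpha_{d/p}$.

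When $p \nmid d$ the extra factor of $p$ lies in the inclusion part $N/(N'd)$, so I reduce the ambient level instead: factoring $S_k(N'd) \hookrightarrow S_k(N/p) \hookrightarrow S_k(N)$, I write $T_p \circ \alpha_d = (T_p \circ \iota) \circ \alpha_d^{N' \to N/p}$ and apply Lemma~\ref{lem: T_p alpha_p base case}(3) to $T_p \circ \iota$ at the level $N = (N/p) \cdot p$. The inner operator $T_p \circ \alpha_d^{N' \to N/p}$ is then resolved by the inductive hypothesis when $p \mid (N/p)/N'$ and by Lemma~\ref{lem: T_p alpha_p base case}(1) when $p \nmid (N/p)/N'$. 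Collecting the two correction terms, one of shape $\delta_{p \nmid N/p}\,\alpha_{pd}$ from part (3) at the top and one of shape $\delta_{p \nmid N'}\,\alpha_{pd}$ from the inductive step, and checking that exactly one of them is present according to whether $v_p(N/N') = 1$ or $v_p(N/N') \ge 2$, yields the single correction $-\delta_{p \nmid N'}\,\alpha_{pd}$.

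I expect the main obstacle to be precisely this reconciliation of correction terms across the two reduction routes. Because $T_p$ genuinely depends on the level, one must verify at each step that the configuration is an exact base case (ambient $=$ sublevel $\times$ a single prime) and that the indicator factors $\delta_{p \nmid (\cdot)}$ combine to give $\delta_{p \nmid N'}$ with the correct multiplicity, neither dropping the $\alpha_{pd}$ term nor double-counting it when both routes could a priori contribute. Keeping domains and codomains straight under the identifications $\alpha_e \circ \alpha_{e'} = \alpha_{ee'}$ and $\iota \circ \alpha_m = \alpha_m$, together with the convention that $\alpha_m$ vanishes when $m \nmid N/N'$, is what makes this bookkeeping go through cleanly.
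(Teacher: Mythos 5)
Your proof is correct and follows essentially the same route as the paper: both reduce everything to the three base cases of Lemma~\ref{lem: T_p alpha_p base case} by factoring $\alpha_d$ through a chain of intermediate levels differing by one prime at a time, with the paper writing out the telescoping product explicitly where you organize it as an induction on $\Omega(N/N')$. The only cosmetic difference is in the case $p \mid d$, where the paper factors $\alpha_d = \alpha_p \circ \alpha_{d/p}$ with $\alpha_p$ outermost so that base case (2) finishes in one step, whereas your factorization with $\alpha_p$ innermost forces the extra (but harmless) sub-case analysis on whether $p \mid d/p$, in which branch there is no ``remaining $T_p \circ \alpha_p$'' to which base case (2) applies.
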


\begin{proof}
    If $p \mid d$, notice that $\alpha_d = \alpha_p \circ \alpha_{\frac{d}{p}}$. By Lemma~\ref{lem: T_p alpha_p base case} (2), we see that 
    $$
    T_p \circ \alpha_d = (T_p \circ \alpha_p) \circ \alpha_{\frac{d}{p}} 
    = p^{k-1} \alpha_1 \circ \alpha_{\frac{d}{p}} = p^{k-1} \alpha_{\frac{d}{p}}.
    $$
    If $p \nmid d$, let $e = v_p(N) - v_p(N')$, so that $\alpha_d = \prod_{i=1}^e \alpha_{1,i} \circ \alpha_{d,0}$, where the maps $\alpha_{1,i} : S_k(N/p^i) \to S_k(N/p^{i-1})$  and $\alpha_{d,0} : S_k(N') \to S_k(N/p^e)$ are intermediate level raising operators.

    Then $p \mid (N/p^i)$ for all $i \le e-1$, hence by Lemma~\ref{lem: T_p alpha_p base case} (3), $T_p \circ \alpha_{1,i} = \alpha_{1,i} \circ T_p$ for all $i \le e-1$, and $T_p \circ \alpha_{1,e} = \alpha_{1,e} \circ T_p - \delta_{p \nmid N'} \cdot \alpha_p$.
    Since Lemma~\ref{lem: T_p alpha_p base case} (1) yields $T_p \circ \alpha_{d,0} = \alpha_{d,0} \circ T_p$, it follows that
    \begin{align*}
    T_p \circ \alpha_d 
    &= \prod_{i=1}^{e-1} \alpha_{1,i} \circ (\alpha_{1,e} \circ T_p - \delta_{p \nmid N'} \cdot \alpha_p  ) \circ \alpha_{d,0} \\
    &= \prod_{i=1}^{e} \alpha_{1,i} \circ \alpha_{d,0} \circ T_p -  \delta_{p \nmid N'} \cdot \prod_{i=1}^{e-1} \alpha_{1,i} \circ \alpha_p \circ \alpha_{d,0}
    = \alpha_d \circ T_p - \delta_{p \nmid N'} \cdot \alpha_{pd} ,
    \end{align*}
    as claimed.
\end{proof}

As a corollary, we obtain a commutation relation for prime powers.

\begin{cor} \label{cor: Hecke operators and level raising prime powers}
    Let $p$ be a prime such that $p \mid \frac{N}{N'}$, and let $d \mid \frac{N}{N'}$.
    Let $r \ge 1$, and set $v_r = \min(v_p(d), r)$ and $T_{p^{-1}} = 0$. Then
    \begin{equation}
        T_{p^r} \circ \alpha_d = p^{v_r(k-1)} \left( \alpha_{\frac{d}{p^{v_r}}} \circ T_{p^{r-v_r}} - 
        \delta_{p \nmid N'} \cdot \alpha_{p \cdot \frac{d}{p^{v_r}}}  \circ T_{p^{r-v_r-1}} \right).
    \end{equation}
\end{cor}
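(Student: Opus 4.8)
The plan is to prove Corollary~\ref{cor: Hecke operators and level raising prime powers} by induction on $r$, using Lemma~\ref{lem: Hecke operator and level raising at primes} as the base case and the multiplicativity relation $T_{p^r} = T_p \circ T_{p^{r-1}} - p^{k-1} T_{p^{r-2}}$ (the standard Hecke recursion at a prime $p \mid N$, where in fact $T_{p^r} = T_p^r$ since $p \mid N$). Actually, the cleanest route is to iterate Lemma~\ref{lem: Hecke operator and level raising at primes} directly: writing $T_{p^r} \circ \alpha_d = T_p \circ (T_{p^{r-1}} \circ \alpha_d)$ and applying the inductive hypothesis, then pushing one more $T_p$ through using the two cases ($p \mid d$ versus $p \nmid d$) of the base lemma.

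First I would treat the case $v_p(d) = 0$, i.e. $p \nmid d$, where $v_r = 0$ for all $r$ and the claimed identity reads $T_{p^r} \circ \alpha_d = \alpha_d \circ T_{p^r} - \delta_{p \nmid N'} \cdot \alpha_{pd} \circ T_{p^{r-1}}$. I would prove this by induction: the base case $r = 1$ is exactly Lemma~\ref{lem: Hecke operator and level raising at primes}, and for the inductive step I would compute $T_{p^r} \circ \alpha_d = T_{p^{r-1}} \circ (T_p \circ \alpha_d) = T_{p^{r-1}} \circ (\alpha_d \circ T_p - \delta_{p \nmid N'} \alpha_{pd})$, then apply the hypothesis to both terms, noting that $\alpha_d \circ T_p$ commutes with $T_{p^{r-1}}$ on the source level and that $\alpha_{pd}$ now has $v_p(pd) = 1$, so the second term requires the $v_p \ge 1$ case.

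Next I would handle $v_p(d) = s \ge 1$. The key observation is that for $r \le s$ we have $v_r = r$, so repeated application of the first case of Lemma~\ref{lem: Hecke operator and level raising at primes} (the $p \mid d$ branch) simply strips off powers of $p$: each application of $T_p$ yields a factor $p^{k-1}$ and lowers $v_p$ by one, so $T_{p^r} \circ \alpha_d = p^{r(k-1)} \alpha_{d/p^r}$, matching the formula since $T_{p^{r-v_r}} = T_{p^0} = T_1 = \mathrm{id}$ and the $\delta$-term carries $T_{p^{-1}} = 0$. For $r > s$, I would split $T_{p^r} = T_{p^{r-s}} \circ T_{p^s}$, apply the $r \le s$ result to reduce $T_{p^s} \circ \alpha_d = p^{s(k-1)} \alpha_{d/p^s}$ with $p \nmid (d/p^s)$, and then invoke the already-established $p \nmid d$ case to finish, checking that the exponents combine to $p^{s(k-1)} = p^{v_r(k-1)}$.

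The main obstacle I anticipate is purely bookkeeping rather than conceptual: one must track the exponent $v_r = \min(v_p(d), r)$ consistently across the two regimes $r \le v_p(d)$ and $r > v_p(d)$, and verify that the transition point glues correctly, in particular that the $\delta_{p \nmid N'}$-term only appears once $all$ the available powers of $p$ in $d$ have been exhausted (since Lemma~\ref{lem: Hecke operator and level raising at primes} produces no error term while $p \mid d$). The convention $T_{p^{-1}} = 0$ is precisely what makes the boundary case $v_r = r$ (with $r - v_r - 1 = -1$) behave correctly, so I would be careful to confirm that the stated formula degenerates to the expected pure power-stripping expression exactly when $v_p(d) \ge r$.
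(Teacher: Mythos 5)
Your overall strategy is essentially the paper's: induction on $r$, with Lemma~\ref{lem: Hecke operator and level raising at primes} supplying both the base case and the mechanism for pushing a single $T_p$ through a level-raising map, and the Hecke recursion $T_{p^m} = T_p T_{p^{m-1}} - \delta_{p \nmid N'} \, p^{k-1} T_{p^{m-2}}$ on $S_k(N')$ recombining the resulting terms (the paper cites Lang for this step). Your treatment of the regime $r \le v_p(d)$ by pure power-stripping, and of the boundary via the convention $T_{p^{-1}} = 0$, matches the paper's second case, and the computations you outline do go through.

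The one point you must repair is the ordering of your two stages, which is circular as written. Your inductive step for $v_p(d) = 0$ peels $T_p$ off the right, $T_{p^r} \circ \alpha_d = T_{p^{r-1}} \circ (T_p \circ \alpha_d)$, and so --- as you yourself note --- requires the full prime-power statement for $\alpha_{pd}$ with $v_p(pd) = 1$ at exponent $r-1$; but you then propose to establish the $v_p(d) \ge 1$ case afterwards by reducing it to the ``already-established'' $v_p(d) = 0$ case. Since every appeal is to a strictly smaller exponent, this is harmless and is fixed by running a single strong induction on $r$ over all $d$ simultaneously rather than two sequential inductions. Alternatively, peel $T_p$ off the left as the paper does, writing $T_{p^r} \circ \alpha_d = T_p \circ (T_{p^{r-1}} \circ \alpha_d)$: after applying the inductive hypothesis, the problematic term becomes $T_p \circ \alpha_{p \cdot d/p^{v}}$, which is handled by the one-step identity $T_p \circ \alpha_{pd'} = p^{k-1} \alpha_{d'}$ from Lemma~\ref{lem: Hecke operator and level raising at primes}, so no instance of the prime-power statement with $v_p \ge 1$ is ever needed inside the $p \nmid d$ case.
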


\begin{proof}
    By induction on $r$. For $r = 1$, the statement is precisely Lemma~\ref{lem: Hecke operator and level raising at primes}.
    
    Since $p \mid N$, we have $T_{p^r} = T_p T_{p^{r-1}}$ on $S_k(N)$, hence by the induction hypothesis
    \begin{align} \label{eq: hecke operator prime powers induction step}
    \nonumber
    T_{p^r} \circ \alpha_d &= T_p \circ (T_{p^{r-1}} \circ \alpha_d) \\
    &= p^{v_{r-1}(k-1)} T_p \circ \left( \alpha_{\frac{d}{p^{v_{r-1}}}} \circ T_{p^{r-1-v_{r-1}}} - 
        \delta_{p \nmid N'} \cdot \alpha_{p \cdot \frac{d}{p^{v_{r-1}}}}  \circ T_{p^{r-v_{r-1}-2}} \right).
    \end{align}
    If $v_p(d) \le r - 1$, then $v_r = v_p(d) = v_{r-1}$, and writing $v \colonequals v_r$, \eqref{eq: hecke operator prime powers induction step} turns into
    $$
    T_{p^r} \circ \alpha_d = p^{v(k-1)} \left( \left( T_p \circ  \alpha_{\frac{d}{p^v}} \right) \circ T_{p^{r-1-v}} - 
        \delta_{p \nmid N'} \cdot \left( T_p \circ \alpha_{p \cdot \frac{d}{p^v}} \right)  \circ T_{p^{r-v-2}} \right).
    $$
    In such a case, since $p \nmid \frac{d}{p^v}$, Lemma~\ref{lem: Hecke operator and level raising at primes} yields
    \begin{align*}
    T_{p^r} &\circ \alpha_d \\
    &= p^{v(k-1)} \left( \left(\alpha_{\frac{d}{p^{v}}} \circ T_p - \delta_{p \nmid N'} \cdot \alpha_{p \cdot \frac{d}{p^{v}}}  \right) \circ T_{p^{r-1-v}} - 
        \delta_{p \nmid N'} \cdot p^{k-1} \alpha_{\frac{d}{p^{v}}}  \circ T_{p^{r-v-2}} \right) \\
    &= p^{v(k-1)} \left( \alpha_{\frac{d}{p^v}} \circ \left( T_p \circ T_{p^{r-v-1}} - \delta_{p\nmid N'} \cdot p^{k-1} T_{p^{r-v-2}} \right) - \delta_{p \nmid N'} \cdot \alpha_{p \cdot \frac{d}{p^v}} \circ T_{p^{r-v-1}} \right) \\
    &= p^{v(k-1)} \left( \alpha_{\frac{d}{p^v}} \circ T_{p^{r-v}} - \delta_{p \nmid N'} \cdot \alpha_{p \cdot \frac{d}{p^v}} \circ T_{p^{r-v-1}} \right),
    \end{align*}
    where the last equality follows from \cite{Lang}*{Thm VII.2.1, Thm VII.4.1}.

    On the other hand, if $v_p(d) \ge r$ then $v_{r-1} = r - 1$ and $p \mid \frac{d}{p^{r-1}}$. Combining \eqref{eq: hecke operator prime powers induction step} with Lemma~\ref{lem: Hecke operator and level raising at primes} we obtain
    \begin{equation*}
        T_{p^r} \circ \alpha_d
        =  p^{(r-1)(k-1)} T_p \circ \alpha_{\frac{d}{p^{r-1}}}
        = p^{r(k-1)} \alpha_{\frac{d}{p^r}}. \qedhere
    \end{equation*}
\end{proof}

Using the multiplicativity of Hecke operators, we get a general relation.

\begin{cor} \label{cor: Hecke and level raising general}
    Let $n \ge 1$ be an integer, and let $d \mid \frac{N}{N'}$. Then
    $$
    T_n \circ \alpha_d = (d, n)^{k-1} \sum_{\substack{d' \mid \left( \frac{n}{(d,n)}, \frac{N}{N'} \right) \\ (d', N') = 1}} \mu(d') \cdot \alpha_{\frac{dd'}{(d,n)}} \circ T_{\frac{n}{d'(d,n)}}.
    $$
\end{cor}

\begin{proof}
    By induction on the number of prime divisors of $n$.
    If $n = p^r$ this is Corollary~\ref{cor: Hecke operators and level raising prime powers}.
    For the induction step, let $m,n$ be integers with $(m,n) = 1$. Then 
    \begin{equation} \label{eq: Hecke operators and level raising multiplicative}
    T_{mn} \circ \alpha_d = T_m \circ (T_n \circ \alpha_d) 
    = (d,n)^{k-1} T_m \circ  \sum_{\substack{d_n \mid \left( \frac{n}{(d,n)}, \frac{N}{N'} \right) \\ 
    (d_n, N') = 1}} \mu(d_n) \cdot \alpha_{\frac{dd_n}{(d,n)}} \circ T_{\frac{n}{d_n(d,n)}}.
    \end{equation}
    Since $(m,n) = 1$, we have $(m, (d,n)) = 1$ and $(m,d_n) = 1$ for any $d_n \mid (n, \frac{N}{N'})$. It follows that for any $d_n \mid \left( \frac{n}{(d,n)}, \frac{N}{N'} \right)$ we have
    $$
    T_m \circ \alpha_{\frac{dd_n}{(d,n)}} = (d, m)^{k-1} \sum_{\substack{d_m \mid \left( \frac{m}{(d,m)}, \frac{N}{N'} \right) \\ (d_m, N') = 1}} \mu(d_m) \cdot \alpha_{\frac{dd_md_n}{(d,m)(d,n)}} \circ T_{\frac{m}{d_m(d,m)}}.
    $$
    Substituting into \eqref{eq: Hecke operators and level raising multiplicative}, we get
    \begin{equation}
    T_{mn} \circ \alpha_d = (d,mn)^{k-1} \sum_{\substack{d' \mid (\frac{mn}{(d,mn)}, \frac{N}{N'}) \\ (d', N') = 1}} \mu(d') \cdot \alpha_{\frac{dd'}{(d,mn)}} \circ T_{\frac{mn}{d'(d,mn)}}. \qedhere
    \end{equation}
\end{proof}

\section{Trace Formulas} \label{sec: trace formulas}

The Eichler-Selberg trace formula \cites{Eichler, Selberg} expresses the traces of Hecke operators on spaces of modular forms in terms of weighted sums of certain Hurwitz-Kronecker class numbers.

Although this trace formula was later generalized by Arthur \cite{Arthur} to treat more general automorphic forms, we narrow our current discussion to classical modular forms and refer the reader to \cite{Knapp} for an excellent introduction.

\subsection{Trace Formula for {$\SL_2(\Z)$}}
Before stating explicit expression for the traces of $T_p \circ W_N$ on the spaces of cusp forms and their new subspaces, we need to introduce some definitions and notations.

A positive definite \defi{binary quadratic form} $Q(x,y) = ax^2 + bxy + cy^2 \in \Z[x,y]$ with integral coefficients $a,b,c \in \Z$ and \defi{discriminant} $n = 4ac-b^2$ is such that $n > 0$ and $a > 0$. 
Let $B_n$ be the space of positive definite binary quadratic forms of discriminant $n$.
$\SL_2(\Z)$ acts naturally on $B_n$, and we let
$$
H(n) = 2 \frac{| \SL_2(\Z) \backslash B_n |}{| \Stab_{\SL_2(\Z)} (Q) |}
$$
be the number of equivalence classes divided by the number of automorphisms of any form $Q \in B_n$. We extend $H$ to all integers, by setting $H(0) = -\frac{1}{12}$ and $H(n) = 0$ if $n < 0$.

\begin{exa}
    If $n = -3$, there is a unique equivalence class of quadratic forms of discriminant $-3$, namely $Q(x,y) = x^2 + xy + y^2$. We have
    $$
    \Stab_{\SL_2(\Z)} (Q) = \pm \left \{ \mat 1001, 
    \mat 11{-1}0, \mat 01{-1}{-1}
    \right \},
    $$
    reflected in $Q(x,y) = Q(x+y,-x) = Q(y, -x-y)$.
    Therefore, $H(3) = \frac{1}{3}$.
\end{exa}


For every even $k > 0$, we define a polynomial $p_k(t,N) \in \Z[t,N]$ as the coefficient of $x^{k-2}$ in the power series development of $(1-tx+Nx^2)^{-1}$. 

\begin{exa}
    We have $p_2(t,N) = 1$ and $p_4(t,N) = t^2 - N$.
\end{exa}

\begin{remark}
    The appearance of the polynomials $p_k(t,N)$ might seem surprising at first glance. However, note that if $M \in M_2(\Z)$ is a matrix with $\Tr(M) = t$ and $\det (M) = N$, and we consider the symmetric representation $\rho_{k-2} : M_2(\Z) \to \Sym^{k-2} \Z^2$, then $\Tr(\rho_{k-2}(M)) = p_k(t,N)$.This makes sense recalling the Eichler-Shimura isomorphism.
\end{remark}

Perhaps in its simplest form, the trace formula for $\SL_2(\Z)$ can be stated as follows.
\begin{thm} \cite{Zagier} Let $k \ge 4$ be an even integer, and let $n > 0$ be an integer.
    \begin{equation} \label{eq: Eichler-Selberg}
    -2 \Tr (T_n | S_k(\SL_2(\Z))) = \sum_{t \in \Z} p_k(t,n) H(4n-t^2) 
    + \sum_{d \mid n} \min \left(d, \frac{n}{d} \right)^{k-1}
    \end{equation}
\end{thm}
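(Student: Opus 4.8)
The plan is to reduce the trace to a sum over $\SL_2(\Z)$-conjugacy classes of integral matrices of determinant $n$, and then to evaluate the contribution of each class according to the sign of its discriminant; the two terms on the right-hand side will correspond respectively to the elliptic classes and to the split hyperbolic and parabolic ones. The structural input that organizes this, and which is hinted at in the remark preceding the statement, is the Eichler--Shimura isomorphism. It identifies the parabolic cohomology $H^1_{\mathrm{par}}(\SL_2(\Z), \Sym^{k-2}\C^2)$ with $S_k(\SL_2(\Z)) \oplus \overline{S_k(\SL_2(\Z))}$ as a module over the Hecke algebra. Since the Hecke eigenvalues are real, the trace of $T_n$ on this cohomology group equals $2\Tr(T_n \mid S_k(\SL_2(\Z)))$, while the cohomological degree $1$ supplies the sign; together these account for the factor $-2$ on the left-hand side and reduce the problem to computing the trace of $T_n$, viewed as a correspondence, on the cohomology of $\SL_2(\Z)$ with coefficients in the symmetric power.

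The second step is to evaluate this cohomological trace by a Lefschetz-type fixed-point count. Expanding $T_n$ through its defining double cosets, the trace becomes a sum over $\SL_2(\Z)$-conjugacy classes of matrices $M \in M_2(\Z)$ with $\det M = n$, each contributing a local term. The coefficient module contributes exactly the factor $\Tr \rho_{k-2}(M) = p_k(t,n)$ with $t = \Tr(M)$, which is why the polynomials $p_k(t,N)$ appear. The remaining task is to weight each conjugacy class correctly and to classify the classes by their discriminant $\Delta = t^2 - 4n$.

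The third step is this classification. For $\Delta < 0$ (the elliptic case $t^2 < 4n$), the matrix $M$ fixes a unique point of $\calH$ and is attached to a positive definite binary quadratic form of discriminant $4n - t^2$; the $\SL_2(\Z)$-conjugacy classes of such $M$ are in bijection with the $\SL_2(\Z)$-equivalence classes of forms in $B_{4n-t^2}$, and the order of $\Stab_{\SL_2(\Z)}(Q)$ enters as the denominator, so that summing over these classes yields $\sum_t p_k(t,n)\, H(4n-t^2)$, with the value $H(0) = -\tfrac{1}{12}$ supplying the $t^2 = 4n$ contribution to this sum. The split hyperbolic classes, where $M$ is $\GL_2(\Q)$-conjugate to a diagonal matrix $\mathrm{diag}(d, n/d)$ with $d \mid n$, together with the parabolic and scalar contributions, produce the divisor sum $\sum_{d \mid n} \min(d, n/d)^{k-1}$.

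I expect the hyperbolic and parabolic contributions to be the main obstacle, for two related reasons. First, $\SL_2(\Z)$ is not cocompact, so its cohomology is that of an open modular curve and the naive Lefschetz formula must be corrected by boundary terms at the cusp; it is these corrections that feed into the divisor sum. Equivalently, in the analytic incarnation of the argument --- representing $T_n$ by a two-variable Poincar\'e kernel $\sum_{\det M = n}(\cdots)^{-k}$ and integrating its diagonal against the hyperbolic measure over a fundamental domain --- the hyperbolic orbits accumulate at the cusp and the associated orbital integrals diverge, forcing a regularization whose finite part yields precisely the \emph{asymmetric} weight $\min(d, n/d)^{k-1}$. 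Tracking the exact constants through this regularization, including the overall sign and the $-\tfrac{1}{12}$, is the delicate part; by contrast the elliptic term is essentially bookkeeping once the bijection with binary quadratic forms is in place. The hypothesis $k \ge 4$ enters to guarantee absolute convergence of the kernel away from the boundary, so that the interchanges of summation and integration are justified.
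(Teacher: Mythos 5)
First, a point of comparison: the paper does not prove this theorem at all --- it is quoted from \cite{Zagier} as a known input (the only verification offered is the numerical example $k=4$, $n=5$ that follows it). So there is no ``paper proof'' to match your argument against; what can be assessed is whether your proposal stands on its own as a proof. It does not, though it is an accurate map of where a proof would go. You correctly identify the two standard architectures (Lefschetz on $H^1_{\mathrm{par}}(\SL_2(\Z),\Sym^{k-2}\C^2)$ via Eichler--Shimura, or the Petersson/Poincar\'e kernel integrated over the diagonal), the origin of the factor $-2$, and the role of $p_k(t,n)=\Tr\rho_{k-2}(M)$. But every step that actually produces the right-hand side is asserted rather than carried out.

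Concretely, three things are missing. (1) The elliptic term: you need the bijection between $\SL_2(\Z)$-conjugacy classes of integral matrices with $\Tr M=t$, $\det M=n$, $t^2<4n$ and $\SL_2(\Z)$-classes of positive definite forms in $B_{4n-t^2}$ (including imprimitive forms --- this is why the Hurwitz class number and not the ordinary one appears), together with the computation of the local contribution of an elliptic fixed point, which is what produces the weight $1/|\Stab_{\SL_2(\Z)}(Q)|$; calling this ``bookkeeping'' does not discharge it, and the convention $H(0)=-\tfrac{1}{12}$ has to be shown to match the scalar/parabolic contribution at $t^2=4n$, not just invoked. (2) The hyperbolic and cuspidal term: you correctly flag that the split hyperbolic orbital integrals diverge at the cusp and must be regularized, but the emergence of the asymmetric weight $\min(d,n/d)^{k-1}$ \emph{is} the content of that regularization, and you state the answer without deriving it; this is where essentially all of the analytic work in Zagier's (or Oesterl\'e's, or Eichler's) proof lives. (3) The cohomological bookkeeping: passing from a Lefschetz count on the open modular curve to the trace on $H^1_{\mathrm{par}}$ requires handling $H^0$, the Eisenstein part of $H^1$ (on which $T_n$ acts with trace $\sigma_{k-1}(n)$), and the boundary of the compactification; you acknowledge these corrections exist but do not show they combine with the hyperbolic term to give exactly $\sum_{d\mid n}\min(d,n/d)^{k-1}$. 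As submitted, the proposal is a correct outline of a known proof, not a proof.
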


\begin{exa}
    When $k = 4$, there are no cusp forms of weight $4$, hence $S_4(\SL_2(\Z)) = 0$ and the trace would vanish for any $n$. For example, if $n = 5$, we obtain
    \begin{align*}
    \sum_{t \in \Z} p_4(t,5) H(20-t^2) &= -5H(20) - 8H(19) - 2H(16) + 8H(11) + 22H(4) \\
    &= -10 - 8 - 3 + 8 + 11 = -2,
    \end{align*}
    precisely cancelling the contribution from the last term.
\end{exa}

\subsection{Trace formula for higher levels}

Following \cite{P}, we introduce a trace formula for the action of the operator $T_n \circ W_Q$ on the space $S_k(N)$. 

But first, we have to introduce some more notations.

Let $N, t, n, u \in \Z$ be such that $u \mid N$ and $u^2 \mid t^2 - 4n$. Denote
\begin{equation} \label{eq: S_N}
S_N(u,t,n) = \{ \alpha \in (\Z / N \Z)^{\times} : \alpha^2 - t \alpha + n \equiv 0 \pmod {Nu} \},
\end{equation}
and let $\varphi_1(N) = [\SL_2(\Z) : \Gamma_0(N)] = N \prod_{p \mid N} \left(1 + \frac{1}{p} \right) $.
Write 
$$
B_N(u,t,n) = \frac{\varphi_1(N)}{\varphi_1(N/u)} |S_N(u,t,n)|, 
$$
and 
\begin{equation} \label{eq: C_N}
C_N(u,t,n) = \sum_{d \mid u} B_N(u/d, t, n) \mu (d)
\end{equation}
for its M{\"o}bius inverse.

Let $N,Q,a,d$ be integers such that $Q \parallel N$. Define
$$
\Phi_{N,Q}(a,d) = \frac{\varphi(Q)}{Q} \sum_{\substack{\frac{N}{Q} = rs, (r,s) \mid a-d \\ (r,a) = 1, (s,d) = 1}} \varphi((r,s)).
$$

Finally, if $n,N$ are integers, denote
$$
\sigma_{1,N}(n) = \sum_{d \mid n, (N,d) = 1} \frac{n}{d}.
$$

We are now ready to write down the trace formula.

\begin{thm}[\cite{P}*{Theorem 4}] \label{thm: Popa trace formula}
Let $k \ge 2$ be even. Then
\begin{align}
\nonumber
\Tr(T_n \circ W_Q | S_k(N)) = 
&-\frac{1}{2} \sum_{\substack{t^2 \le 4Qn \\ Q \mid t }}
\frac{p_k(t,Qn)}{Q^{\frac{k}{2} - 1}} 
\sum_{\substack{u \mid Q \\ u' \mid \frac{N}{Q}}} H \left( \frac{4Qn - t^2}{(uu')^2} \right) C_{\frac{N}{Q}} (u', t, Qn) \mu (u) \\
&- \frac{1}{2} \sum_{\substack{Qn=ad \\ Q \mid a+d}} 
\frac{\min(a,d)^{k-1}}{Q^{\frac{k}{2}-1}} \cdot \Phi_{N, Q}(a,d) + \delta_{k,2} \sigma_{1,N}(n)
\end{align}
\end{thm}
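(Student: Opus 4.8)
The plan is to derive this as the Eichler--Selberg trace formula for the single operator $T_n \circ W_Q$ acting on $S_k(N) = S_k(\Gamma_0(N))$, generalizing the level-one formula \eqref{eq: Eichler-Selberg}. First I would unwind the definitions \eqref{eq: Hecke operator double coset} and \eqref{eq: Atkin-Lehner operator} to write $T_n \circ W_Q$ as a finite sum $f \mapsto Q^{1-k/2}\sum_{\delta} f[\delta]_k$, where $\delta$ runs over representatives for $\Gamma_0(N) \backslash \Delta$ and $\Delta$ is the set of integral matrices of determinant $Qn$ whose reduction mod $N$ is upper-triangular away from $Q$ and anti-diagonal at $Q$ (the Atkin--Lehner normalizer interchanges the two cusps at $Q$). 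Representing this operator through its holomorphic kernel and integrating over a fundamental domain for $\Gamma_0(N) \backslash \calH$, the trace becomes a sum over $\Gamma_0(N)$-conjugacy classes of the matrices $\delta$, organized by the trace $t = \Tr(\delta)$ and the fixed determinant $Qn$. The weight-$k$ contribution of a class is $p_k(t, Qn) = \Tr \rho_{k-2}(\delta)$, which is exactly why these polynomials appear.

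The contributions then split according to the sign of $t^2 - 4Qn$. The elliptic classes, with $t^2 < 4Qn$, produce the first double sum: the Atkin--Lehner condition at $Q$ forces $Q \mid t$ and, after a local analysis at the primes dividing $Q$, yields the inclusion--exclusion factor $\sum_{u \mid Q} \mu(u)$ together with the rescaled discriminant inside $H\bigl(\tfrac{4Qn-t^2}{(uu')^2}\bigr)$; the local count of oriented optimal embeddings at $N/Q$ is the quantity $B_{N/Q}(u',t,Qn)$, whose M\"obius inverse $C_{N/Q}(u',t,Qn)$ records the primitive count. The class number $H$ weights each elliptic class by the reciprocal of its number of automorphisms, exactly as in the level-one case. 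The normalization in the definition $W_Q = Q^{1-\frac{k}{2}}\Wtilde_Q$ is precisely what produces the factor $Q^{-(k/2-1)}$, i.e.\ the power $Q^{k/2-1}$ in the denominators of both the elliptic and the hyperbolic terms.

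The hyperbolic classes, where $t^2 - 4Qn$ is a positive perfect square so that $Qn = ad$ splits with distinct eigenvalues $a, d$ satisfying $Q \mid a + d$, contribute the second sum. Here the $\Gamma_0(N)$-conjugacy classes are counted by the factor $\Phi_{N,Q}(a,d)$, which is built from counting compatible splittings $N/Q = rs$ of the level subject to the coprimality conditions, and each class is weighted by $\min(a,d)^{k-1}$, the surviving eigenvalue power. Finally, the central/parabolic boundary, which is present only because the kernel fails to be absolutely convergent in weight $2$, is handled by Hecke's regularization and contributes the correction $\delta_{k,2}\,\sigma_{1,N}(n)$.

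The main obstacle is the local conjugacy-class count at the primes dividing $N$, and in particular disentangling the two qualitatively different behaviours at $Q$ versus at $N/Q$: at $N/Q$ one counts oriented embeddings in the usual way and M\"obius-inverts to $C_{N/Q}$, whereas at $Q$ the Atkin--Lehner twist replaces the determinant $n$ by $Qn$, forces the anti-diagonal reduction and the divisibility $Q \mid t$ (resp.\ $Q \mid a+d$), and contributes the sieve factor $\mu(u)$ rather than an embedding count. Matching the archimedean normalizations across these local pieces, and separately verifying the weight-$2$ Eisenstein correction, are the delicate bookkeeping points; the underlying analytic input is the classical Eichler--Selberg/Selberg trace formula, which I would invoke rather than reprove.
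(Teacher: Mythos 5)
The paper does not actually prove this statement: it is imported verbatim as \cite{P}*{Theorem 4}, so there is no internal argument to compare yours against. That said, your sketch correctly identifies the architecture of Popa's proof --- realizing $T_n \circ W_Q$ as a coset operator attached to integral matrices of determinant $Qn$ satisfying Atkin--Lehner congruence conditions, computing the trace via the holomorphic kernel over a fundamental domain, splitting by conjugacy class according to the sign of $t^2 - 4Qn$, and handling weight $2$ by regularization --- and your identification of $p_k(t,Qn)$ with $\Tr\rho_{k-2}(\delta)$ matches the remark the paper makes when introducing these polynomials.

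As a proof, however, the proposal has a genuine gap: every quantity that constitutes the actual content of the theorem is asserted rather than derived. You state that the local analysis at $Q$ ``yields'' the sieve factor $\sum_{u\mid Q}\mu(u)$ and the rescaled discriminants inside $H$, that the count at $N/Q$ ``is'' $B_{N/Q}(u',t,Qn)$ with M\"obius inverse $C_{N/Q}$, and that the hyperbolic classes ``are counted by'' $\Phi_{N,Q}(a,d)$ --- but none of these identifications is established, and they are precisely where the work lies. The paper's Appendix is a cautionary example: the explicit evaluation of $C_N(u,t,n)$ at $p=2$ in \cite{P}*{Lemma 4.5} contained an error that only surfaced upon carrying out that local computation carefully (see Lemma~\ref{lem: Popa fix}). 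A complete argument would have to either execute these local conjugacy-class counts at the primes dividing $Q$ and $N/Q$, or reduce the statement to a precisely quoted general double-coset form of the Eichler--Selberg formula and then verify that the $\Delta$-set attached to $T_n\circ W_Q$ produces exactly the displayed terms; invoking ``the classical trace formula'' without doing either leaves each term unverified. A minor further point: the classes with $t^2 = 4Qn$ are scalar rather than elliptic; they enter through $H(0) = -\tfrac{1}{12}$ and require separate treatment from the $t^2 < 4Qn$ range you describe.
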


\begin{remark}
    Implementing this function according to \cite{P} revealed a slight error in \cite{P}*{Lemma 4.5}. This error is corrected in Appendix~\ref{sec: fixing SZ}.
\end{remark}

\begin{exa}
When we substitute $Q = N = 1$ and $k \ge 4$, we obtain
$$
-2 \Tr(T_n | S_k(1)) = \sum_{t^2 \le 4n} p_k(t,n) H(4n-t^2) 
+ \sum_{d \mid n} \min \left(d,\frac{n}{d} \right)^{k-1},
$$
where we have used 
$$
C_1(1,t,n) = B_1(1,t,n) = |S_1(1,t,n)| = 1,
$$
and $\Phi_{1,1}(a,d) = 1$, recovering \eqref{eq: Eichler-Selberg}.
\end{exa}

Specializing to the case where $n = 1$ and $Q = N$, we obtain the following.

\begin{cor} \label{cor: Popa al trace}
    Let $k \ge 2$ be even. Then
\begin{align}
\nonumber
\Tr(W_N |S_k(N)) = 
&-\frac{1}{2} \sum_{\substack{t^2 \le 4N \\ N \mid t }}
\frac{p_k(t,N)}{N^{\frac{k}{2} - 1}} 
\sum_{u \mid N} H \left( \frac{4N - t^2}{u^2} \right) \mu (u) \\
&- \frac{\varphi(N)}{2N} \sum_{\substack{N=ad \\ N \mid a+d}} 
\frac{\min(a,d)^{k-1}}{N^{\frac{k}{2}-1}}  + \delta_{k,2}
\end{align}
\end{cor}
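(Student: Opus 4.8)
The plan is to specialize Theorem~\ref{thm: Popa trace formula} directly, setting $n = 1$ and $Q = N$, and then to simplify each of its three terms by exploiting the fact that $N/Q = 1$ collapses the auxiliary sums.

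First I would treat the main (class-number) sum. Substituting $Qn = N$, the outer condition $t^2 \le 4Qn$, $Q \mid t$ becomes $t^2 \le 4N$, $N \mid t$, while $p_k(t,Qn)/Q^{\frac{k}{2}-1} = p_k(t,N)/N^{\frac{k}{2}-1}$. In the inner double sum the index $u$ runs over $u \mid Q = N$, but $u'$ runs only over $u' \mid N/Q = 1$, forcing $u' = 1$; it therefore suffices to evaluate $C_{N/Q}(u',t,Qn) = C_1(1,t,N)$. Unwinding \eqref{eq: C_N} and \eqref{eq: S_N}, one finds $C_1(1,t,N) = B_1(1,t,N) = |S_1(1,t,N)|$, and since $(\Z/\Z)^{\times}$ is trivial and every congruence modulo $1$ holds, this value is $1$. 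This reproduces the first line of the corollary.

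Next I would handle the finite (elliptic/hyperbolic) sum, where $Qn = ad$ becomes $N = ad$ and $Q \mid a+d$ becomes $N \mid a+d$. The only point needing care is the evaluation of $\Phi_{N,Q}(a,d) = \Phi_{N,N}(a,d)$: with $N/Q = 1$, the factorization $N/Q = rs$ forces $r = s = 1$, so the defining sum has the single term $\varphi((1,1)) = 1$ while the coprimality conditions $(r,a) = (s,d) = 1$ are automatic, giving $\Phi_{N,N}(a,d) = \varphi(N)/N$. Pulling this constant out of the sum yields the second line. Finally the term $\delta_{k,2}\sigma_{1,N}(n)$ specializes, with $n = 1$, to $\delta_{k,2}\sigma_{1,N}(1)$; since the only divisor of $1$ coprime to $N$ is $1$ itself, $\sigma_{1,N}(1) = 1$, leaving $\delta_{k,2}$. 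Assembling the three simplified pieces gives the stated identity.

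There is no substantive obstacle here: the result is a bookkeeping specialization, and the only steps that demand attention are the three degenerate evaluations $C_1(1,t,N) = 1$, $\Phi_{N,N}(a,d) = \varphi(N)/N$, and $\sigma_{1,N}(1) = 1$, each following from carefully tracing the definitions in the case $N/Q = 1$. The verification is further streamlined by the fact that the identity $C_1(1,t,n) = 1$ has already been recorded in the preceding $Q = N = 1$ example.
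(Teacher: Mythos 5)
Your proposal is correct and is exactly the derivation the paper intends: the text offers no separate proof, simply stating that the corollary follows by "specializing to the case $n=1$ and $Q=N$" in Theorem~\ref{thm: Popa trace formula}, and your three degenerate evaluations $C_1(1,t,N)=1$, $\Phi_{N,N}(a,d)=\varphi(N)/N$, and $\sigma_{1,N}(1)=1$ are precisely the bookkeeping required (the first already being recorded in the example following the theorem).
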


\begin{remark}
    If $N > 4$, this simplifies greatly to be
    \begin{equation} \label{eq: W_N N > 4}
    \Tr(W_N |  S_k(N)) = \frac{(-1)^{\frac{k}{2}}}{2} 
    \sum_{u \mid N} H \left( \frac{4N}{u^2} \right) \mu(u) + \delta_{k = 2}.
    \end{equation}
\end{remark}

\subsection{Trace formula for newforms}

Although Theorem~\ref{thm: Popa trace formula} is already quite useful, we would like to obtain a formula for the trace on the new subspace $S_k(N)^{\new}$. In order to do that, we will express $\Tr(T_n \circ W_Q | S_k(N))$ as a linear combination of $\Tr(T_{n'} \circ W_{Q'} | S_k(N')^{\new})$ for some $n',Q',N'$ which are smaller than $n, Q, N$. Then, using M{\"o}bius inversion, we will obtain a formula for the trace on $S_k(N)^{\new}$.

For $N' \mid N$, let us denote
\begin{equation}\label{eq: image of new subspace}
    \iota_{N,N'} \left (S_k(N')^{\new} \right) \colonequals 
    \bigoplus_{d \mid \frac{N'}{N}} 
    \alpha_d \left( S_k(N')^{\new} \right).
\end{equation}

For integers $d \mid n' \mid n$ and an integer $N$, let us define the sets
\begin{equation} \label{eq: divisors primr to Nprime}
        \calN_{N}(d) = \left \{ 
        N' \mid N : 
        d \mid \frac{N}{N'}, \quad (d, N') = 1, 
        \right \},
\end{equation}
and
\begin{equation} \label{eq: square divisors}
        \calN_{N, n}(d,n') = \left \{ 
        N' \in \calN_{N}(d) : 
        \frac{N}{N' n'} = \square,  
        \left( 
        \frac{N}{N'}, dn
        \right) = n'
        \right \}.
\end{equation}

\begin{lem}
    Let $n, N$ be integers, and let $Q \parallel N$. Write $Q' = (Q,N')$, and for any integer $d$ write $d_Q = (d, Q)$ and $d_{N/Q} = \frac{d}{d_Q}$ so that $d = d_Q d_{N/Q}$. If $d \mid N$, then it further holds that $d_{N/Q} = (d, N/Q)$ and $(d_Q, d_{N/Q}) = 1$.
    For integers $d \mid n' \mid n$, we define the set
    \begin{equation} \label{eq: N(d,n) sets}
    \calN_{Q,N,n}(d, n') =  \calN_{Q,n_Q}(d_Q, n_Q') \cdot \calN_{N/Q}(d_{N/Q}).
    \end{equation}
    Then
    \begin{align*}
        \Tr&(T_n \circ W_Q | S_k(N)) \\
        &= \sum_{\substack{d | n' \mid n \\ n'_{N/Q} = d_{N/Q}^2}}
        (n')^{\frac{k}{2}} 
        \frac{\mu(d)}{d}
        \sum_{N'}
        \sigma_{0,n} \left( 
        \frac{N/Q}{N'/Q'}
        \right)
        \Tr \left(T_{\frac{n}{n'}} \circ W_{Q'} \middle | S_k(N')^{\new} \right).
    \end{align*} 
    where $N'$ ranges over the set $\calN_{Q,N,n}(d, n')$.
\end{lem}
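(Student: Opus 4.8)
The plan is to use the Atkin--Lehner--Li decomposition of $S_k(N)$ into level-raised new subspaces and to compute the trace one block at a time. Writing $\iota_{N,N'}$ as in \eqref{eq: image of new subspace}, one has the direct sum decomposition $S_k(N) = \bigoplus_{N' \mid N} \iota_{N,N'}(S_k(N')^{\new})$, and each $\iota_{N,N'}(S_k(N')^{\new}) = \bigoplus_{\delta \mid N/N'} \alpha_\delta(S_k(N')^{\new})$ is itself an internal direct sum in which the $\alpha_\delta$ are injective with linearly independent images. The commutation relations of Section~\ref{sec: level raising} show that $T_n \circ W_Q$ maps each $\iota_{N,N'}(S_k(N')^{\new})$ into itself (the level $N'$ is unchanged; only the raising divisor moves), so that $\Tr(T_n \circ W_Q | S_k(N)) = \sum_{N' \mid N} \Tr(T_n \circ W_Q | \iota_{N,N'}(S_k(N')^{\new}))$, and the inner trace is the sum over $\delta \mid N/N'$ of the traces of the diagonal blocks $\alpha_\delta^{-1} \circ \pi_\delta \circ (T_n \circ W_Q) \circ \alpha_\delta$ on $S_k(N')^{\new}$, where $\pi_\delta$ is the projection onto the $\alpha_\delta$-summand.

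Next I would compute each diagonal block explicitly. Applying Lemma~\ref{lem: AL and level-raising} to rewrite $W_Q \circ \alpha_\delta = (\delta'/\delta)^{1-\frac{k}{2}} \alpha_{\delta'} \circ W_{Q'}$ with $Q' = (Q,N')$, and then Corollary~\ref{cor: Hecke and level raising general} to expand $T_n \circ \alpha_{\delta'}$, one obtains $T_n \circ W_Q \circ \alpha_\delta$ as a sum over admissible $e$ of terms $\alpha_{\delta' e/(\delta',n)} \circ T_{n/(e(\delta',n))} \circ W_{Q'}$ with coefficient $(\delta'/\delta)^{1-\frac{k}{2}}(\delta',n)^{k-1}\mu(e)$, in which $T_{n/(e(\delta',n))} \circ W_{Q'}$ preserves $S_k(N')^{\new}$ since both Hecke and Atkin--Lehner operators do. Because the decomposition over $\delta$ is direct, the diagonal block picks out the unique term with $\delta' e/(\delta',n) = \delta$, i.e. $e = \delta(\delta',n)/\delta'$. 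Setting $g = (\delta',n)$, $n' = eg$ and $d = e$, a short computation using $\delta'/\delta = g^2/n'$ collapses the coefficient to exactly $(n')^{\frac{k}{2}}\frac{\mu(d)}{d}$, so each diagonal block equals $(n')^{\frac{k}{2}}\frac{\mu(d)}{d}\, T_{n/n'}\circ W_{Q'}$ on $S_k(N')^{\new}$ and contributes $(n')^{\frac{k}{2}}\frac{\mu(d)}{d}\,\Tr(T_{n/n'}\circ W_{Q'} | S_k(N')^{\new})$.

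It then remains to re-index the double sum over $(N',\delta)$ by the data $(d,n',N')$ and to compute the resulting multiplicities, which is where the conditions $d \mid n' \mid n$, the constraint $n'_{N/Q} = d_{N/Q}^2$, and the factor $\sigma_{0,n}$ enter. Here I would exploit $Q \parallel N$ to split $\delta = \delta_Q \delta_{N/Q}$, $N' = Q'\cdot(N'/Q')$ and $n' = n'_Q n'_{N/Q}$ multiplicatively and treat the two parts separately. In the $N/Q$-part the Atkin--Lehner operator acts trivially, so $\delta' = \delta$ and the diagonal condition forces $e = g$, hence $n'_{N/Q} = d_{N/Q}^2$; counting the divisors $\delta_{N/Q} \mid \frac{N/Q}{N'/Q'}$ with $(\delta_{N/Q},n) = d_{N/Q}$ prime by prime shows the only free choices occur at primes not dividing $n$, producing exactly $\sigma_{0,n}\!\left(\frac{N/Q}{N'/Q'}\right)$ such $\delta_{N/Q}$ and pinning down the range $\calN_{N/Q}(d_{N/Q})$ of \eqref{eq: divisors primr to Nprime}. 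In the $Q$-part the relation $\delta'_Q = Q/(\delta_Q Q')$ flips the raising divisor; substituting this into $e$ and $n'$ produces both the perfect-square condition $\frac{Q}{Q' n'_Q} = \square$ and the gcd condition $(\frac{Q}{Q'}, d_Q n_Q) = n'_Q$ of \eqref{eq: square divisors}, and one checks that each valid triple $(d_Q,n'_Q,Q')$ arises from a single $\delta_Q$, so the $Q$-part contributes multiplicity one. Assembling the two parts yields the range $\calN_{Q,N,n}(d,n')$ of \eqref{eq: N(d,n) sets} and the stated formula.

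I expect the genuine difficulty to lie entirely in this last step: the multiplicative bookkeeping that produces the divisor-counting factor $\sigma_{0,n}$ and the set $\calN_{Q,N,n}(d,n')$. In particular one must track carefully the distinction between a $\gcd$ such as $d_{N/Q} = (d,N/Q)$ and the quotient $d/(d,Q)$ (these agree only once $\mu(d)\neq 0$ forces $d$ squarefree), verify that the perfect-square and gcd conditions exactly characterize which $\delta_Q$ survive, and confirm the injectivity of $\delta_Q \mapsto (d_Q,n'_Q,Q')$ that gives multiplicity one in the $Q$-part. The commutation relations and the decomposition do the conceptual work; the conditions defining $\calN_{Q,N,n}(d,n')$ are precisely the shadow of this counting.
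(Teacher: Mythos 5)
Your proposal is correct and follows essentially the same route as the paper: the decomposition $S_k(N) = \bigoplus_{N'\mid N}\iota_{N,N'}(S_k(N')^{\new})$, Lemma~\ref{lem: AL and level-raising} and Corollary~\ref{cor: Hecke and level raising general} to expand $T_n\circ W_Q\circ\alpha_\delta$, extraction of the diagonal term via $\delta' e/(\delta',n)=\delta$, and the same $Q$-versus-$N/Q$ splitting to produce the square and gcd conditions and the $\sigma_{0,n}$ count. The only cosmetic difference is that the paper diagonalizes each block using a newform eigenbasis (eigenvalues $a_{m,j}$, $\varepsilon_j$) rather than working with the block projections $\alpha_\delta^{-1}\circ\pi_\delta\circ(T_n\circ W_Q)\circ\alpha_\delta$ directly.
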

\begin{proof}
    By \cite{DS}*{Theorem 5.8.2}, the space $S_k(N')^{\new}$ admits a basis of newforms $\{f_1, \ldots, f_r \}$. Therefore, the forms $\{ \alpha_D(f_j) : D \mid \frac{N}{N'}, 1 \le j \le r \}$ constitute a basis for the space $\iota_{N,N'}(S_k(N')^{\new})$. Since each $f_j$ is an eigenform for the Hecke operators, there exist $a_{m,j}$ such that $T_m(f_j) = a_{m,j} f_j$ for all integers $m$ and all $j = 1,2,\ldots,r$.
    
    Since $W_{Q'}$ commutes with the $T_m$, the $f_j$ are also eigenforms for $W_{Q'}$, hence there are $\varepsilon_j \in \{-1,1\}$ such that $W_{Q'}(f_j) = \varepsilon_j f_j$.

    Using Lemma~\ref{lem: AL and level-raising},
    we compute for any $D \mid \frac{N}{N'}$ that
    \begin{align} \label{eq: T_n W_Q action on basis 1}
    \nonumber
    (T_n \circ W_Q)(\alpha_D(f_j)) &= \left( \frac{D}{D'} \right)^{\frac{k}{2}-1} (T_n \circ \alpha_{D'})(W_{Q'} (f_j)) \\
    &= \varepsilon_j\left( \frac{D}{D'} \right)^{\frac{k}{2}-1} (T_n \circ \alpha_{D'} )(f_j),
    \end{align}
    where $D' = \left(\frac{N}{DN'}, Q \right) \cdot \left( D, \frac{N}{Q} \right) = \frac{Q}{Q'D_Q} \cdot D_{N/Q}$. 
    Applying Corollary~\ref{cor: Hecke and level raising general}, we get
    \begin{align}
    \nonumber
    (T_n \circ \alpha_{D'})(f_j) &= (D', n)^{\frac{k}{2}-1}
     \sum_{\substack{d \mid \left( \frac{n}{(D',n)}, \frac{N}{N'} \right) \\ (d, N') = 1}} \mu(d) \cdot \left(\alpha_{\frac{D'd}{(D',n)}} \circ T_{\frac{n}{d(D',n)}} \right)(f_j) \\
     &= (D', n)^{k-1} \sum_{\substack{d \mid \left( \frac{n}{(D',n)}, \frac{N}{N'} \right) \\ (d, N') = 1}} \mu(d) a_{\frac{n}{d(D',n)},j} \cdot \alpha_{\frac{D'd}{(D',n)}}(f_j),
    \end{align}
    which combined with \eqref{eq: T_n W_Q action on basis 1} transforms to
    \begin{equation*}
        (T_n \circ W_Q)(\alpha_D(f_j)) =
        \varepsilon_j
        \left( \frac{D}{D'} \right)^{\frac{k}{2}-1}
        (D', n)^{k-1} \sum_{\substack{d \mid \left( \frac{n}{(D',n)}, \frac{N}{N'} \right) \\ (d, N') = 1}} \mu(d) a_{\frac{n}{d(D',n)},j} \cdot \alpha_{\frac{D'd}{(D',n)}}(f_j).
    \end{equation*}
    Note that the above sum only contributes to the trace if there exists $d \mid \left(\frac{n}{(D',n)}, \frac{N}{N'}\right)$ such that $(d, N') = 1$ and $D'd = D(D',n)$. 

    Assume this is the case, and fix such an integer $d$.
    Since $(d, N') = 1$, we have $(d_Q, Q') = 1$ and $(d_{N/Q}, N'/Q') = 1$, and from $d \mid \frac{N}{N'}$ we deduce that $d_Q \mid \frac{Q}{Q'}$ and $d_{N/Q} \mid \frac{N/Q}{N'/Q'}$, so that $Q' \in \calN_Q(d_Q)$ and $N'/Q' \in \calN_{N/Q}(d_{N/Q})$.
    
    Denote $n' = d(D',n)$, and note that $n' \mid n$. Then $(D',n) = \frac{n'}{d}$, hence 
    $$
    \frac{Q d_Q}{Q'D_Q} \cdot D_{N/Q} d_{N/Q} = D' d = D \frac{n'}{d} = D_Q \frac{n_Q'}{d_Q} \cdot D_{N/Q} \frac{n_{N/Q}'}{d_{N/Q}},
    $$
    which leads to
    \begin{equation} \label{eq: square condition D_Q}
    D_Q^2 = \frac{Q d_Q^2}{Q'n_Q'}, \quad
    d_{N/Q}^2 = n_{N/Q}'.
    \end{equation}
    Since $(D', n) = \frac{n'}{d}$, we have 
    $(D_Q', n_Q) = \frac{n_Q'}{d_Q}$, hence
    \begin{equation} \label{eq: coprime condition D_Q}
    \left(
    \frac{D_Q}{d_Q}, \frac{n_Q d_Q}{n_Q'}
    \right)
    = \left(
    \frac{Qd_Q}{Q'D_Qn_Q'}, \frac{n_Q d_Q}{n_Q'}
    \right)
    = \left( \frac{D_Q'd_Q}{n_Q'}, \frac{n_Q d_Q}{n_Q'} \right) = 1,
    \end{equation}
    with all the above quantities being integral. 
    
    In particular, as $\frac{Q}{Q'n_Q'} = \frac{D_Q}{d_Q} \cdot \frac{Qd_Q}{Q'D_Qn_Q'}$, we see that $n_Q' \mid \frac{Q}{Q'}$ and $\left(\frac{Q}{Q'n_Q'}, \frac{d_Q n_Q}{n_Q'} \right) = 1 $, hence $\left(\frac{Q}{Q'}, d_Q n_Q \right) = n_Q'$.
    Returning to \eqref{eq: square condition D_Q}, we see that $\frac{Q}{Q' n_Q'} = \left(\frac{D_Q}{d_Q} \right)^2$ is a perfect square, showing that $Q' \in \calN_{Q,n_Q}(d_Q, n_Q')$, hence $N' \in \calN_{Q,N,n}(d,n')$ for some $n' \mid n$ with $n_{N/Q}' = d_{N/Q}^2$ and $d \mid n'$.

    Conversely, if $N' \in \calN_{Q,N,n}(d,n')$ for some $n' \mid n$ with $n_{N/Q}' = d_{N/Q}^2$ and $d \mid n'$, then $\frac{Q}{Q'n_Q'}$ is a square, showing the existence of a unique integer $D_Q$ such as in \eqref{eq: square condition D_Q}. 
    Let $D_{N/Q} \mid \frac{N/Q}{N'/Q'}$ be such that $(D_{N/Q}, n) = d_{N/Q}$, and consider $D = D_Q D_{N/Q}$.

    Since $\left( \frac{Q}{Q'}, d_Q n_Q \right) = n_Q'$, $n_Q' \mid \frac{Q}{Q'}$ showing that $d_Q \mid D_Q$, and \eqref{eq: coprime condition D_Q} follows, showing that $(D_Q', n_Q) = \frac{n_Q'}{d_Q}$.
    By construction, $(D_{N/Q}',n) = (D_{N/Q}, n) = d_{N/Q}$, hence $(D', n) = \frac{n_Q'}{d_Q} \cdot d_{N/Q} = \frac{n'}{d}$. From \eqref{eq: square condition D_Q} we obtain $D'd = D(D',n)$, and as $d(D',n) = n' \mid n$, $d \mid \frac{N}{N'}$ and $(d,N') = 1$, we see that $\alpha_D(f_j)$ contributes to the trace.
    Moreover, in such a case, we have
    $$
    \frac{D}{D'} = \frac{d_Q^2}{n_Q'} = \frac{d^2}{n'}, \quad
    (D', n) = \frac{n'}{d}.
    $$
    Therefore, 
    \begin{align*}
    \Tr(T_n \circ W_Q | & \iota_{N,N'}(S_k(N')^{\new})) \\
    &= \sum _j \sum_{\substack{d \mid n' \mid n \\ d_{N/Q}^2 = n_{N/Q}' \\ N' \in \calN_{Q,N,n}(d,n')}} 
    \sum_{(D_{N/Q}, n) = d_{N/Q}}
    \varepsilon_j \left( \frac{d^2}{n'} \right)^{\frac{k}{2}-1}
        \left(\frac{n'}{d} \right)^{k-1} \mu(d) a_{\frac{n}{n'},j} \\
    &= \sum_{\substack{d \mid n' \mid n \\ d_{N/Q}^2 = n_{N/Q}' \\ N' \in \calN_{Q,N,n}(d,n')}} 
    \sigma_{0,n} \left( \frac{N/Q}{N'/Q'} \right)
    (n')^{\frac{k}{2}} \frac{\mu(d)}{d} 
    \sum_j
    \varepsilon_j a_{\frac{n}{n'},j}.
    \end{align*}

    Recall (e.g. \cite{DS}*{Section 5.7}) that we have the following decomposition
    \begin{equation} \label{eq: newspace decomposition}
    S_k(N) = \bigoplus_{N' \mid N} 
    \iota_{N,N'} \left (S_k(N')^{\new} \right).
    \end{equation}
    Summing over all $N' \mid N$, we obtain
    $$
    \Tr(T_n \circ W_Q | S_k(N))
    = \sum_{\substack{d \mid n' \mid n \\ d_{N/Q}^2 = n_{N/Q}'}} 
    \sum_{N'}
    \sigma_{0,n} \left( \frac{N/Q}{N'/Q'} \right)
    (n')^{\frac{k}{2}} \frac{\mu(d)}{d} 
    \sum_j
    \varepsilon_j a_{\frac{n}{n'},j},
    $$
    where $N'$ ranges over the set $\calN_{Q,N,n}(d,n')$,
    which proves the claim.
\end{proof}

In order to perform M{\"o}bius inversion, we would need to separate out the terms where $n' = 1$. This is the content of the next corollary.

\begin{cor}
    Let $n, N$ be positive integers, and let $Q \parallel N$. 
    Denote 
    \begin{align*}
    T_{<n,k} &(Q,N) \\
    &= \sum_{\substack{d | n' \mid n \\ n'_{N/Q} = d_{N/Q}^2 \\ 1 < n'}}
        (n')^{\frac{k}{2}} 
        \frac{\mu(d)}{d}
        \sum_{N'}
        \sigma_{0,n} \left( 
        \frac{N/Q}{N'/Q'}
        \right)
        \Tr \left(T_{\frac{n}{n'}} \circ W_{Q'} \middle | S_k(N')^{\new} \right),
    \end{align*}
    where $N'$ ranges over the set $\calN_{Q,N,n}(d, n')$ defined in \eqref{eq: N(d,n) sets}.
    Then
    \begin{align} \label{eq: formula before inversion}
        \Tr&(T_n \circ W_Q | S_k(N)) \\
        \nonumber
        &= \sum_{\substack{N' \mid N \\ \frac{Q}{Q'} = \square \\ \left( \frac{Q}{Q'}, n \right) = 1}}
        \sigma_{0,n} \left( 
        \frac{N/Q}{N'/Q'}
        \right)
        \Tr \left(T_n \circ W_{Q'} \middle | S_k(N')^{\new} \right) + T_{<n,k}(Q,N).
    \end{align}    
\end{cor}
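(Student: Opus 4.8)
The plan is to read off \eqref{eq: formula before inversion} from the preceding lemma by isolating, inside its double sum, the single contribution coming from $n' = 1$. Since the inner summation variable satisfies $d \mid n'$, the index $n' = 1$ forces $d = 1$; and the side condition $n'_{N/Q} = d_{N/Q}^2$ then reads $1 = 1$, so this term genuinely occurs. Every index with $n' > 1$ is, by definition, exactly what is collected into $T_{<n,k}(Q,N)$. Thus I would first split the lemma's right-hand side as the $n' = 1$ summand plus $T_{<n,k}(Q,N)$, leaving only the task of evaluating the former and matching it against the first sum of \eqref{eq: formula before inversion}.

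For the $n' = 1$, $d = 1$ term every scalar factor trivializes: $(n')^{k/2} = 1$, $\mu(d)/d = 1$, and $T_{n/n'} = T_n$, so the summand collapses to
\begin{equation*}
\sum_{N' \in \calN_{Q,N,n}(1,1)} \sigma_{0,n}\!\left(\frac{N/Q}{N'/Q'}\right) \Tr\!\left(T_n \circ W_{Q'} \,\middle|\, S_k(N')^{\new}\right).
\end{equation*}
The heart of the proof is therefore to describe the index set $\calN_{Q,N,n}(1,1)$ explicitly. Unwinding \eqref{eq: N(d,n) sets} with $d = n' = 1$ gives $d_Q = d_{N/Q} = 1$ and $n_Q' = 1$, whence $\calN_{Q,N,n}(1,1) = \calN_{Q,n_Q}(1,1) \cdot \calN_{N/Q}(1)$. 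Reading off \eqref{eq: divisors primr to Nprime} and \eqref{eq: square divisors}, the second factor is the full set of divisors of $N/Q$, while the first is $\{Q' \mid Q : Q/Q' = \square,\ (Q/Q', n_Q) = 1\}$.

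Finally I would exploit $Q \parallel N$ to rewrite this product of sets as a single sum over divisors of $N$. Every $N' \mid N$ factors uniquely as $N' = Q' M$ with $Q' = (N',Q) \mid Q$ and $M = N'/Q' \mid N/Q$, and under this correspondence membership of $N'$ in $\calN_{Q,N,n}(1,1)$ amounts precisely to the two conditions $Q/Q' = \square$ and $(Q/Q', n_Q) = 1$ on $Q'$, with $M$ unconstrained. Because $Q/Q' \mid Q$, the coprimality $(Q/Q', n_Q) = 1$ is equivalent to $(Q/Q', n) = 1$, matching the summation constraints in \eqref{eq: formula before inversion}. The result follows. I do not anticipate a genuine obstacle here: once the index $n' = 1$ is separated off, the argument is bookkeeping, and the only points requiring attention are the faithful translation of \eqref{eq: N(d,n) sets} into the two explicit conditions on $Q'$, and the transfer of the coprimality condition from $n_Q$ to $n$.
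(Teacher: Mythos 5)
Your proposal is correct and matches the paper's approach: the corollary is obtained from the preceding lemma simply by splitting off the $n'=1$ (hence $d=1$) term, which the paper treats as immediate. Your explicit unwinding of $\calN_{Q,N,n}(1,1)$ into the conditions $\frac{Q}{Q'}=\square$ and $\left(\frac{Q}{Q'},n\right)=1$, including the observation that coprimality to $n_Q$ is equivalent to coprimality to $n$ since $\frac{Q}{Q'}\mid Q$, is the right bookkeeping and fills in what the paper leaves unsaid.
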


\section{Arithmetic functions and M{\"o}bius inversion}

The final step is using M{\"o}bius inversion to obtain a formula for the trace on the newspace from the traces on the full spaces of modular forms of lower levels.
In order to do that, we need to establish some properties of the functions being used. 

Recall that a function $f : \N \to \C$ is called an \defi{arithmetic function}, and the arithmetic functions form a commutative ring with respect to \defi{Dirichlet convolution}
$$
(f * g)(N) = \sum_{N' \mid N} f(N') g\left(\frac{N}{N'} \right),
$$
with multiplicative unit $\delta_1$ given by 
$$
\delta_1(N) = \begin{cases}
    1 & N = 1 \\
    0 & N \ne 1
\end{cases}.
$$

Since $Q$ depends on $N$, we require some changes in order to
write our formulas as identities in the Dirichlet ring.

Let $\calQ$ be a fixed set of prime numbers, and for a number $N$, let $Q_{\calQ}(N) \mid N$ be the largest divisor of $N$ divisible only by primes in $\calQ$. 
Note that $Q_{\calQ}(N) \parallel N$ and for any $Q \parallel N$, if $\calQ$ is the set of primes dividing $Q$, then $Q = Q_{\calQ}(N)$.

Fix $k$, $n$ and $\calQ$, and let us write 
$$
F_{\calQ,n,k}(N) = \Tr(T_n \circ W_{Q_{\calQ}(N)} | S_k(N)) - T_{<n,k}(N,Q_{\calQ}(N)).
$$

Then $F_{\calQ,n,k}$ is an arithmetic function, 
and similarly we can define the following arithmetic functions.
\begin{align}
\nonumber
G_{\calQ,n,k}(N) &= \Tr \left(T_n \circ W_{Q_{\calQ}(N)} \middle | S_k(N)^{\new} \right), \\
\label{eq: arithmetic multiplicative function}
H_{\calQ, n}(N) &= \begin{cases}
    \sigma_{0,n} \left(\frac{N}{Q_{\calQ}(N)} \right) & (Q_{\calQ}(N), n) = 1, \ Q_{\calQ}(N) = \square \\
    0 & \text{ else }
\end{cases}.
\end{align}

Using these notations, \eqref{eq: formula before inversion} reads
$$
F_{\calQ,n,k} = G_{\calQ,n,k} * H_{\calQ, n}.
$$

The point of M{\"o}bius inversion is that arithmetic functions $f : \N \to \C$ with $f(1) \ne 0$ are invertible. In our case, $H_{\calQ, n}(1) = 1 \ne 0$, hence $H_{\calQ, n}$ is invertible and we can write
\begin{equation} \label{eq: mobius inversion}
G_{\calQ,n,k} = F_{\calQ,n,k} * H_{\calQ, n}^{-1}.
\end{equation}

Note that if $(M,N) = 1$, then $Q_{\calQ}(MN) = Q_{\calQ}(M) Q_{\calQ}(N)$, and $Q_{\calQ}(MN)$ is a square if and only if both $Q_{\calQ}(M)$ and $Q_{\calQ}(N)$ are squares. Since $N/Q_{\calQ}(N)$ and $M/Q_{\calQ}(M)$ are coprime, we also have
$$
\sigma_{0,n} \left( \frac{MN}{Q_{\calQ}(MN)} \right)
=
\sigma_{0,n} \left( \frac{M}{Q_{\calQ}(M)} \right)
\sigma_{0,n} \left( \frac{N}{Q_{\calQ}(N)} \right),
$$
showing that $H_{\calQ,n}$ is multiplicative. 
It follows that $H_{\calQ,n}^{-1}$ is also multiplicative, hence it suffices to describe its values on prime powers. 

\begin{lem}
Let $H_{\calQ, n}$ be the function described in \eqref{eq: arithmetic multiplicative function}.
Let values of $\alpha_{\calQ,n}$ be the multiplicative function whose values on prime powers are described by 
    \begin{equation} \label{eq: alpha}
    \alpha_{\calQ,n}(p^e) = \begin{cases}
        1 & p \nmid n, \quad p \notin \calQ, \quad e = 2 \\
        -2 & p \nmid n, \quad p\notin \calQ, \quad e = 1 \\
        -1 & p \nmid n, \quad p \in \calQ, \quad e = 2 \\
        -1 & p \mid n, \quad p \notin \calQ, \quad e = 1 \\
        0 & else
    \end{cases}.
\end{equation}
Then $\alpha_{\calQ,n} = H_{\calQ, n}^{-1}$ is its inverse with respect to Dirichlet convolution.
\end{lem}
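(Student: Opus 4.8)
The plan is to lean on two standard facts: the Dirichlet inverse of a multiplicative function is again multiplicative, and two multiplicative functions $f,g$ with $f(1)=g(1)=1$ satisfy $f*g=\delta_1$ precisely when $(f*g)(p^e)=0$ for every prime $p$ and every $e\ge 1$. Since $\alpha_{\calQ,n}$ is multiplicative by definition and $H_{\calQ,n}$ was already shown to be multiplicative above, it therefore suffices to verify the single-prime identity $\sum_{j=0}^{e} H_{\calQ,n}(p^{e-j})\,\alpha_{\calQ,n}(p^{j}) = \delta_{e,0}$ for each prime $p$ and every $e\ge 0$.

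First I would unwind $H_{\calQ,n}(p^e)$ explicitly. Since $Q_{\calQ}(p^e)=p^e$ when $p\in\calQ$ and $Q_{\calQ}(p^e)=1$ when $p\notin\calQ$, the definition \eqref{eq: arithmetic multiplicative function} splits into four cases according to whether $p\in\calQ$ and whether $p\mid n$. When $p\notin\calQ$ the conditions $(Q_{\calQ}(p^e),n)=1$ and $Q_{\calQ}(p^e)=\square$ hold automatically, giving $H_{\calQ,n}(p^e)=\sigma_{0,n}(p^e)$, which equals $e+1$ if $p\nmid n$ and $1$ if $p\mid n$. When $p\in\calQ$ one has $p^e/Q_{\calQ}(p^e)=1$, and the two conditions force $p\nmid n$ together with $e$ even, so $H_{\calQ,n}(p^e)=1$ if $p\nmid n$ and $e$ is even and $H_{\calQ,n}(p^e)=0$ otherwise.

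Then, in each of the four cases, I would package the required identity as a product of formal power series in an indeterminate $x$, tracking $p^m$ by $x^m$. Writing $\mathcal{A}(x)=\sum_m \alpha_{\calQ,n}(p^m)x^m$ and $\mathcal{H}(x)=\sum_m H_{\calQ,n}(p^m)x^m$, the single-prime identity is exactly $\mathcal{H}(x)\,\mathcal{A}(x)=1$. The four verifications then read: for $p\nmid n,\ p\notin\calQ$, one has $\mathcal{A}(x)=1-2x+x^2=(1-x)^2$ and $\mathcal{H}(x)=\sum_m(m+1)x^m=(1-x)^{-2}$; for $p\nmid n,\ p\in\calQ$, $\mathcal{A}(x)=1-x^2$ and $\mathcal{H}(x)=\sum_m x^{2m}=(1-x^2)^{-1}$; for $p\mid n,\ p\notin\calQ$, $\mathcal{A}(x)=1-x$ and $\mathcal{H}(x)=(1-x)^{-1}$; and for $p\mid n,\ p\in\calQ$, both $\mathcal{A}(x)$ and $\mathcal{H}(x)$ equal $1$ since all higher prime-power values vanish. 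In every case the product is $1$, which is precisely the identity needed.

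The argument is essentially bookkeeping, so there is no deep obstacle. The only place demanding care is the faithful translation of the definition of $H_{\calQ,n}$ into the four prime-power cases, and in particular keeping in mind that the constraints $(Q_{\calQ}(p^e),n)=1$ and $Q_{\calQ}(p^e)=\square$ are vacuous when $p\notin\calQ$ but genuinely restrict the parity of $e$ when $p\in\calQ$; getting those conditions right is exactly what makes the factor $(1-x^2)$ rather than $(1-x)$ appear in the second case.
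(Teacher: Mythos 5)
Your proposal is correct and follows essentially the same route as the paper: reduce to prime powers via multiplicativity, compute $H_{\calQ,n}(p^e)$ in the four cases according to whether $p\in\calQ$ and $p\mid n$, and verify the convolution identity in each case. The only cosmetic difference is that you package the prime-power verification as a product of generating functions, whereas the paper sums $\sum_i \alpha_{\calQ,n}(p^i)H_{\calQ,n}(p^{e-i})$ directly; both computations are identical in content.
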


\begin{proof}
    It suffices to show that $\alpha_{\calQ,n} * H_{\calQ,n} = 0$ for every prime power $p^e$ such that $e \ge 1$. We divide to several cases.

    If $p \notin \calQ$, then $Q_{\calQ}(p^i) = 1$ for all $i$, so that 
    $$
    H_{\calQ,n}(p^i) = \sigma_{0,n}(p^i)
    = \begin{cases}
        i + 1 & p \nmid n \\
        1 & p \mid n
    \end{cases}.
    $$
    It follows that 
    \begin{align*}
    (\alpha_{\calQ,n}*H_{\calQ,n})(p^e)
    &= \sum_{i=0}^e \alpha_{\calQ,n}(p^i) H_{\calQ,n}(p^{e-i}) \\
    &= \begin{cases}
        1 \cdot (e+1) - 2 \cdot e + 1 \cdot (e-1) & p \nmid n\\
        1 \cdot 1 - 1 \cdot 1& p \mid n
    \end{cases} = 0.
    \end{align*}

    If $p \in \calQ$, then $Q_{\calQ}(p^i) = p^i$ for all $i$, so that
    $$
    H_{\calQ,n}(p^i) = \begin{cases}
        1  & p \nmid n, \quad 2 \mid i \\
        0 & p \mid n \ \text{ or } \ 2 \nmid i
    \end{cases}.
    $$
    It follows that
    \begin{align*}
    (\alpha_{\calQ,n}*H_{\calQ,n})(p^e)
    &= \sum_{i=0}^e \alpha_{\calQ,n}(p^i) H_{\calQ,n}(p^{e-i}) \\
    &= \begin{cases}
         1 \cdot \delta_{2 \mid e} - 1 \cdot \delta_{2 \mid e} & p \nmid n\\
         0 & p \mid n
    \end{cases} = 0.
    \end{align*}

    As both $\alpha_{\calQ,n}$ and $H_{\calQ,n}$ are multiplicative, it follows that $\alpha_{\calQ,n} * H_{\calQ,n} = \delta_1$.
\end{proof}

Expanding \eqref{eq: mobius inversion}, we obtain our final formula. 

\begin{cor} \label{cor: main thm}
    Let $n, N$ be positive integers, and let $Q \parallel N$.
    Let $\calQ$ be the set of primes dividing $Q$.
    Then
    \begin{align}
    \nonumber
        \Tr&(T_n \circ W_Q | S_k(N)^{\new}) \\ \label{eq: final formula}
        &= \sum_{N' \mid N}
        \alpha_{\calQ,n} \left( \frac{N}{N'} \right)
        \left(
        \Tr \left(T_n \circ W_{Q'} \middle | S_k(N') \right)
        - T_{<n,k}(Q',N')
        \right).
    \end{align}      
\end{cor}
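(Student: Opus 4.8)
The plan is to recognize the statement as the direct expansion of the M\"obius inversion identity \eqref{eq: mobius inversion}, so that almost all of the substantive work has already been carried out in the preceding lemmas; what remains is to unwind the definitions of the arithmetic functions $F_{\calQ,n,k}$, $G_{\calQ,n,k}$, $H_{\calQ,n}$ and to check the bookkeeping of how $Q$ varies with the level.

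First I would fix $\calQ$ to be the set of primes dividing $Q$ and observe that for every divisor $N' \mid N$ one has $Q_{\calQ}(N') = (Q, N') = Q'$: since $Q \parallel N$, the $\calQ$-part of $N$ is exactly $Q$, so the largest $\calQ$-divisor of $N'$ is $(Q, N')$. This identification guarantees that evaluating $F_{\calQ,n,k}$ and $G_{\calQ,n,k}$ at $N'$ reproduces precisely the quantities $\Tr(T_n \circ W_{Q'} \mid S_k(N')) - T_{<n,k}(Q',N')$ and $\Tr(T_n \circ W_{Q'} \mid S_k(N')^{\new})$ that appear in the claimed formula.

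Next I would invoke the identity $F_{\calQ,n,k} = G_{\calQ,n,k} * H_{\calQ,n}$, which restates \eqref{eq: formula before inversion}. The only point needing care is that, although $Q$ depends on $N$, the substitutions $Q/Q' = Q_{\calQ}(N/N')$ and $\frac{N/Q}{N'/Q'} = \frac{N/N'}{Q_{\calQ}(N/N')}$ convert the level-dependent sum into a genuine Dirichlet convolution indexed by $N' \mid N$, with the summation conditions $Q/Q' = \square$ and $(Q/Q', n) = 1$ matching exactly the support of $H_{\calQ,n}$. Since $H_{\calQ,n}(1) = 1 \ne 0$, the function $H_{\calQ,n}$ is a unit in the Dirichlet ring, and by the preceding lemma its inverse is $\alpha_{\calQ,n}$; hence $G_{\calQ,n,k} = F_{\calQ,n,k} * \alpha_{\calQ,n}$.

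Finally I would expand this convolution: evaluating at $N$ gives
$$
G_{\calQ,n,k}(N) = \sum_{N' \mid N} \alpha_{\calQ,n}\!\left(\frac{N}{N'}\right) F_{\calQ,n,k}(N'),
$$
and substituting the definitions of $F_{\calQ,n,k}(N')$ and $G_{\calQ,n,k}(N)$ together with $Q' = (Q,N')$ yields the assertion. I expect no genuine obstacle in this last step: the real content lies in the earlier commutation relations and in verifying $\alpha_{\calQ,n} * H_{\calQ,n} = \delta_1$, and the only place demanding attention here is confirming that the $\calQ$-part identifications $Q_{\calQ}(N') = (Q,N')$ and $Q_{\calQ}(N/N') = Q/Q'$ are internally consistent, so that the convolution is indexed correctly and the level-dependence of $Q$ causes no mismatch.
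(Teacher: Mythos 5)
Your proposal is correct and follows exactly the paper's route: the paper proves this corollary simply by expanding the Dirichlet-convolution identity $G_{\calQ,n,k} = F_{\calQ,n,k} * H_{\calQ,n}^{-1}$ from \eqref{eq: mobius inversion}, using the computation $H_{\calQ,n}^{-1} = \alpha_{\calQ,n}$ and the identification $Q_{\calQ}(N') = (Q,N') = Q'$ for divisors $N' \mid N$, just as you describe. Your explicit verification that the $\calQ$-part bookkeeping makes \eqref{eq: formula before inversion} a genuine convolution is exactly the content the paper leaves implicit in its setup of the arithmetic functions.
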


When $n = 1$, $T_{<n,k}(Q,N) = 0$, and we simply obtain
\begin{cor} \label{cor: W_Q new}
Let $N$ be a positive integer, and let $Q \parallel N$. Let $\calQ$ be the set of primes dividing $Q$.
    Then
    \begin{equation}
        \Tr(W_Q | S_k(N)^{\new}) 
        = \sum_{N' \mid N}
        \alpha_{\calQ,n} \left( \frac{N}{N'} \right)
        \Tr \left(W_{Q'} \middle | S_k(N') \right)
        .
    \end{equation}  
\end{cor}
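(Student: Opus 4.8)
The plan is to specialize the master formula of Corollary~\ref{cor: main thm}, namely \eqref{eq: final formula}, to the case $n = 1$, and to check that the two $n$-dependent ingredients degenerate in the expected way.

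First I would record that $T_1$ is the identity operator on $S_k(N)$: indeed, $\Delta_1 = \Gamma_0(N)$ consists of a single coset, so \eqref{eq: Hecke operator double coset} gives $T_1(f) = f$. Consequently $T_1 \circ W_{Q'} = W_{Q'}$ on every space $S_k(N')$ appearing in \eqref{eq: final formula}, which turns each term $\Tr(T_1 \circ W_{Q'} \mid S_k(N'))$ into $\Tr(W_{Q'} \mid S_k(N'))$.

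Next I would show that the correction term $T_{<1,k}(Q',N')$ vanishes identically. Its defining sum ranges over triples $d \mid n' \mid n$ subject to the constraint $1 < n'$; when $n = 1$ the divisibility $n' \mid 1$ forces $n' = 1$, which is incompatible with $1 < n'$. Hence the index set is empty and $T_{<1,k}(Q',N') = 0$ for every $N' \mid N$. Substituting both observations into \eqref{eq: final formula} yields exactly the asserted identity, with $\alpha_{\calQ,n}$ understood at $n = 1$, so that every prime is automatically coprime to $n$ and the relevant branches of \eqref{eq: alpha} are the ones with $p \nmid n$.

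I expect no genuine obstacle here: the statement is a clean specialization rather than a new computation. The only point deserving care is the bookkeeping behind the vanishing of $T_{<1,k}$ — one must read the constraint $1 < n'$ together with $n' \mid n$ correctly to see that the index set is \emph{empty}, not merely small. Once this is in place, the result drops out of Corollary~\ref{cor: main thm} with no further work.
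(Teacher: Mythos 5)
Your proposal is correct and follows exactly the paper's (implicit) argument: the paper likewise obtains Corollary~\ref{cor: W_Q new} by setting $n=1$ in Corollary~\ref{cor: main thm}, observing that $T_{<1,k}$ vanishes because the constraint $1 < n' \mid 1$ empties the index set, and identifying $T_1$ with the identity. Your added care about why the index set is empty (rather than merely small) is a reasonable elaboration of a step the paper leaves unstated.
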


When $Q = N$, we also have $\alpha_{\calQ,1} (m) = \mu(\sqrt{m})$. Combining it with \eqref{eq: W_N N > 4} we obtain the following formula.
\begin{cor}
    Let $N$ and $k > 2$ be positive integers. 
    Assume further that if $d < 4$ divides $N$ then $N/d$ is not a square.
    Then
    $$
    \Tr(W_N | S_k(N)^{\new})
    = \frac{(-1)^{\frac{k}{2}}}{2}  \sum_{\substack{N' | N \\ \frac{N}{N'} = \square}}
    \mu \left(\sqrt{\frac{N}{N'}} \right)
    \sum_{u \mid N'} H \left( \frac{4N'}{u^2} \right) \mu(u) .
$$
\end{cor}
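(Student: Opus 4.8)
The plan is to specialize the general new-space formula of Corollary~\ref{cor: W_Q new} to the Fricke case $Q = N$ and then substitute the simplified full-space formula \eqref{eq: W_N N > 4} at each lower level. First I would set $Q = N$ in Corollary~\ref{cor: W_Q new}. Then $\calQ$ is the set of all primes dividing $N$, and for every $N' \mid N$ we have $Q' = (N, N') = N'$, so $W_{Q'} = W_{N'}$, giving
$$
\Tr(W_N | S_k(N)^{\new}) = \sum_{N' \mid N} \alpha_{\calQ,1}\left(\frac{N}{N'}\right) \Tr(W_{N'} | S_k(N')).
$$

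Next I would evaluate $\alpha_{\calQ,1}$ on $m = N/N'$. Since every prime dividing $m$ lies in $\calQ$, and $n = 1$ forces $p \nmid n$ for all $p$, the third line of \eqref{eq: alpha} shows $\alpha_{\calQ,1}(p^e) = -1$ when $e = 2$ and $\alpha_{\calQ,1}(p^e) = 0$ otherwise. By multiplicativity $\alpha_{\calQ,1}(m)$ is nonzero precisely when $m$ is the square of a squarefree integer, in which case it is a product of $-1$'s over the primes dividing the squarefree integer $\sqrt m$, that is, $\mu(\sqrt m)$. Hence the sum collapses onto those $N'$ with $N/N' = \square$, with coefficient $\mu(\sqrt{N/N'})$.

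The key verification is that the running hypothesis makes the simplified formula \eqref{eq: W_N N > 4}, which is valid only for $N' > 4$, applicable to every surviving term. I would check that the sum (over $N' \mid N$ with $N/N' = \square$) contains no $N' \le 4$: for $N' \in \{1,2,3\}$ a term would force $N/N'$ to be a square, contradicting the hypothesis at $d = N' < 4$; and for $N' = 4$ a term would force $N/4$, and hence $N = (2\sqrt{N/4})^2$, to be a perfect square, contradicting the hypothesis at $d = 1$. The one place requiring care — and the main (if minor) obstacle — is precisely this boundary case $N' = 4$, which the clause ``$d < 4$'' excludes only indirectly, through $d = 1$ rather than $d = 4$.

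Finally, for each surviving $N' > 4$, the assumption $k > 2$ gives $\delta_{k=2} = 0$, so \eqref{eq: W_N N > 4} reads $\Tr(W_{N'} | S_k(N')) = \frac{(-1)^{k/2}}{2} \sum_{u \mid N'} H(4N'/u^2)\mu(u)$. Substituting this into the collapsed sum and pulling the constant $\frac{(-1)^{k/2}}{2}$ outside yields
$$
\Tr(W_N | S_k(N)^{\new}) = \frac{(-1)^{\frac{k}{2}}}{2} \sum_{\substack{N' \mid N \\ N/N' = \square}} \mu\left(\sqrt{\frac{N}{N'}}\right) \sum_{u \mid N'} H\left(\frac{4N'}{u^2}\right)\mu(u),
$$
which is the claimed identity.
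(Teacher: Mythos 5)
Your proposal is correct and takes essentially the same route as the paper, which obtains the corollary by specializing Corollary~\ref{cor: W_Q new} to $Q=N$, identifying $\alpha_{\calQ,1}(m)=\mu(\sqrt m)$ (zero unless $m$ is a square), and substituting \eqref{eq: W_N N > 4}. Your explicit verification that the hypothesis excludes every $N'\le 4$ from the surviving sum --- including the boundary case $N'=4$, which is ruled out indirectly via $d=1$ --- is exactly what the hypothesis is there for and is carried out correctly.
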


In the special case when $n$ is a prime, we can also write things more explicitly. Indeed, assume $n = p$ is a prime. Then the only divisor $1 < n'$ of $n$ is $n' = p$. 
If $p \nmid Q$, then $n'_{N/Q} = p$ is not a square, so that $T_{<p,k},(Q,N) = 0$.
If $p \mid Q$, then $n'_{N/Q} = 1$, so $d \in \{1, p\}$.
Therefore, if $p \mid Q$, we get
\begin{align} \label{eq: n = p}
T_{<p,k}(Q,N) 
&= p^{\frac{k}{2}} \sum_{N' \in \calN_{Q,N,p}(1,p)} 
\sigma_0 \left(\frac{N/Q}{N'/Q'} \right) \Tr \left( W_{Q'} \middle | 
S_k(N')^{\new}
\right) \\
\nonumber
&- p^{\frac{k}{2}-1} \sum_{N' \in \calN_{Q,N,p}(p,p)} 
\sigma_0 \left( \frac{N/Q}{N'/Q'} \right) \Tr \left( W_{Q'} \middle | 
S_k(N')^{\new}
\right).
\end{align}

We note that $\calN_{Q,N,p}(1,p) = \calN_{Q,p}(1,p) \cdot \calN_{N/Q}(1)$, so that
\begin{equation}
\calN_{Q,N,p}(1,p) = \left \{ N' \mid N : \frac{Q}{Q'p} = \square, \quad p \mid \frac{Q}{Q'} \right \},
\end{equation}
and 
$\calN_{Q,N,p}(p,p) = \calN_{Q,p}(p,p) \cdot \calN_{N/Q}(1)$, so that 
\begin{equation}
\calN_{Q,N,p}(p,p) = \left \{ N' \mid N : \frac{Q}{Q'p} = \square, \quad p \parallel \frac{Q}{Q'}, \quad p \nmid Q' \right \}.
\end{equation}

In particular, if $p^2 \mid Q$, then $\calN_{Q,N,n}(p,p) = \emptyset$.

If we further specialize to the case $Q = N$, we obtain
\begin{align}
\nonumber
T_{<p,k}& (N,N) \\ \label{eq: n = p, Q = N}
&= p^{\frac{k}{2}} \sum_{N' \in \calM_N(1)} \Tr(W_{N'} | S_k(N')^{\new})
- p^{\frac{k}{2}-1} \sum_{N' \in \calM_N(p)} \Tr(W_{N'} | S_k(N')^{\new}),
\end{align}
where $\calM_N(d) = \calN_{N,N,p}(d,p)$ for $d \in \{1,p \}$.

Before continuing, we note the following identities for any  function $G$.
Write 
$$
s_d(G) = \sum_{\substack{N' | N \\ p \nmid \frac{N}{N'}}} \mu \left( \sqrt{\frac{N}{N'}} \right) \sum_{N'' \in \calM_{N'}(d)} G(N'')
$$
for $d \in \{1,p\}$.
Then
\begin{equation*}
s_1(G) = \sum_{\substack{N'' | N \\ \frac{N}{N''p} = \square}}
G(N'') \sum_{\substack{N'' | N' | N \\ p \nmid \frac{N}{N'} = \square}} \mu \left( \sqrt{\frac{N}{N'}} \right) 
= \sum_{i=0}^{\left[\frac{v_p(N)-1}{2}\right]} G \left( \frac{N}{p^{2i+1}} \right),
\end{equation*}

and
\begin{equation*}
s_p(G)
= \sum_{\substack{N'' | N \\ \frac{N}{N''p} = \square}}
G(N'') \sum_{\substack{pN'' | N' | N \\ p \nmid \frac{N}{N'} = \square, p^2 \nmid N'}} \mu \left( \sqrt{\frac{N}{N'}} \right) 
= \delta_{p^2 \nmid N} \cdot G \left( \frac{N}{p} \right).
\end{equation*}

Plugging \eqref{eq: n = p, Q = N} back into \eqref{eq: final formula}, and using the above identities with $G = G_{\calQ, p, k}$ we obtain the following corollary.

\begin{cor} \label{cor: T_p W_N new}
    Let $p$ be a prime, let $N,k$ be positive integers with $k \ge 2$ even. 
    Let $w_k(N) = \Tr(W_N | S_k(N)^{\new})$, and if $p \mid N$ let
    $$
    W_{<N,p} = \delta_{p^2 \nmid N} \cdot p^{\frac{k}{2}-1} w_k \left(\frac{N}{p} \right)
     - p^{\frac{k}{2}} \sum_{i = 0}^{\left[\frac{v_p(N)-1}{2}\right]} w_k \left(\frac{N}{p^{2i+1}} \right),
    $$
    while if $p \nmid N$, let $W_{<N,p} = 0$.
    Then the following formula holds. 
    \begin{equation*}
    \Tr(T_p \circ W_N | S_k(N)^{\new}) 
    = \sum_{\substack{N' \mid N \\ p \nmid \frac{N}{N'} = \square}}
    \mu \left( \sqrt{\frac{N}{N'}} \right) 
    \Tr(T_p \circ W_{N'} | S_k(N')) + W_{<N,p}.
\end{equation*}
\end{cor}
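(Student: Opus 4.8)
The plan is to evaluate the master formula \eqref{eq: final formula} at $Q = N$ and $n = p$. The first step is to pin down the coefficients $\alpha_{\calQ,p}(N/N')$ when $\calQ$ is the set of \emph{all} primes dividing $N$. For any $N' \mid N$ every prime dividing $N/N'$ already lies in $\calQ$, so by multiplicativity and the table \eqref{eq: alpha} only two patterns can contribute: a prime $p' \neq p$ appearing to exponent $2$ contributes $-1$, whereas the prime $p$ (which satisfies $p \mid n$ and $p \in \calQ$) always falls into the ``else'' row and contributes $0$. Hence $\alpha_{\calQ,p}(N/N')$ vanishes unless $N/N'$ is a perfect square coprime to $p$ with squarefree root, in which case it equals $\mu(\sqrt{N/N'})$. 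Since moreover $Q' = (N',N) = N'$, so that $W_{Q'} = W_{N'}$, formula \eqref{eq: final formula} collapses to a sum over those $N' \mid N$ with $p \nmid N/N' = \square$, weighted by $\mu(\sqrt{N/N'})$.

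Next I would split this sum into its two pieces. The terms carrying $\Tr(T_p \circ W_{N'} \mid S_k(N'))$ give directly the main sum of the corollary. Into the remaining piece, $-\sum_{N'} \mu(\sqrt{N/N'})\, T_{<p,k}(N',N')$, I would substitute the explicit value of $T_{<p,k}(N',N')$ supplied by \eqref{eq: n = p, Q = N} (applied with $N$ replaced by $N'$), whose summands are the newspace Atkin--Lehner traces $w_k(N'') = \Tr(W_{N''} \mid S_k(N'')^{\new})$ ranging over $N'' \in \calM_{N'}(1)$ and $N'' \in \calM_{N'}(p)$.

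The resulting nested double sums are exactly $s_1(w_k)$ and $s_p(w_k)$ in the notation introduced just above, so applying the two evaluations $s_1(G) = \sum_{i=0}^{[(v_p(N)-1)/2]} G(N/p^{2i+1})$ and $s_p(G) = \delta_{p^2 \nmid N}\, G(N/p)$ with $G = w_k$ turns the correction piece into $-p^{k/2}\sum_i w_k(N/p^{2i+1}) + p^{k/2-1}\delta_{p^2 \nmid N}\, w_k(N/p)$, which is precisely $W_{<N,p}$. For the excluded case $p \nmid N$, note that then $p \notin \calQ$ and for every $N' \mid N$ one has $p \nmid N' = Q$, so $n'_{N/Q} = p$ is never a square and $T_{<p,k}(N',N') = 0$; the correction thus vanishes and $W_{<N,p} = 0$, consistent with the stated definition. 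Combining the two pieces yields the claimed identity.

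The main obstacle is establishing the two $s_d$-identities, which is where the genuine combinatorics sits; everything else is bookkeeping. Both reduce to a M\"obius-type cancellation over the chain of intermediate levels $N'' \mid N' \mid N$ with $N/N'$ a squarefree square prime to $p$: for $s_1$ one must check that, after interchanging the order of summation, the inner sum over admissible $N'$ collapses to a single surviving term for each odd power $p^{2i+1}$, while for $s_p$ the extra constraint $p^2 \nmid N'$ forces all contributions to cancel except the one present exactly when $p^2 \nmid N$. Once these two inner-sum evaluations are verified, the corollary follows by the direct substitution above.
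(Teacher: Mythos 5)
Your proposal is correct and follows essentially the same route as the paper: specialize \eqref{eq: final formula} to $Q=N$, $n=p$, observe that $\alpha_{\calQ,p}(N/N')$ reduces to $\mu(\sqrt{N/N'})$ supported on squares prime to $p$, substitute \eqref{eq: n = p, Q = N} for the correction terms, and collapse the resulting double sums via the $s_1$ and $s_p$ evaluations (the paper likewise states these identities just before the corollary, with the inner M\"obius sums telescoping exactly as you describe). The only caveat is that you defer the verification of the two $s_d$-identities, which is indeed where the remaining combinatorial content sits, but your identification of them and of the $p\nmid N$ degenerate case matches the paper's argument.
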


Note that substituting Theorem~\ref{thm: Popa trace formula} and Corollary~\ref{cor: W_Q new} in the above corollary yields an explicit formula for the trace of $T_p \circ W_N$ on the new subspace.

\appendix

\section{Correction of previous formulas} \label{sec: fixing SZ}

\subsection{Trace formula in \cite{SZ}}
It seems that in \cite{SZ} there is an error in the formula for the trace of the operators on the total cuspidal subspace. Let us write down the correct version of this formula (second formula, p. 117, in the block of 3 formulas). 

\begin{thm} \label{thm: SZ fix}
Assume $(l,m) = 1$ and $n \mid m$ with $\left(n, \frac{m}{n} \right) = 1$. Let $T_{l}$ and $W_n$ denote the $l$-th Hecke operator and $n$-th Atkin-Lehner involution on $M_{2k-2}(m)$ as in \cite{SZ}. Then
    $$
    \Tr(T_{l} \circ W_n, S_{2k-2}(m)) = 
    \sum_{\substack{m' \mid m \\ m/m' \text{ squarefree}}}
    \mu \left( \frac{n}{(n,m')} \right) s_{k,m'}(l,(n,m')),
    $$
    where $s_{k,m}(l,n)$ is the quantity described in \cite{SZ}*{Theorem 1}.
\end{thm}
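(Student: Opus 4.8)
The plan is to reduce the total-space trace to the fundamental quantities $s_{k,m'}(l,\cdot)$ by decomposing $S_{2k-2}(m)$ along its newform structure and tracking how $T_l \circ W_n$ acts on each old-form copy. First I would recall the precise definition of $s_{k,m}(l,n)$ from \cite{SZ}*{Theorem 1} and match \cite{SZ}'s normalizations of $T_l$ and $W_n$ on $M_{2k-2}(m)$ with the conventions of Section~\ref{sec: modular forms} (weight $\kappa = 2k-2$), since the entire point is a careful re-derivation of the $n$-dependence. The two structural inputs I would use are: that $(l,m)=1$ forces $T_l$ to commute with every level-raising map (Lemma~\ref{lem: T_p alpha_p base case}(1), equivalently Corollary~\ref{cor: Hecke and level raising general} with $(l,m/m')=1$), so the Hecke operator contributes nothing to the level combinatorics; and that the Atkin--Lehner operator restricts across levels, with $W_n$ at level $m$ acting on the level-$m'$ component as $W_{(n,m')}$ (Lemma~\ref{lem: AL and level-raising} with $N \to m$, $Q \to n$, $N' \to m'$, $Q' = (n,m')$). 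This Atkin--Lehner restriction is exactly the step mishandled in \cite{SZ}.

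Next I would run the trace computation on the decomposition $S_{2k-2}(m) = \bigoplus_{m' \mid m}\bigoplus_{d \mid m/m'} \alpha_d\big(S_{2k-2}(m')^{\new}\big)$, the analogue of \eqref{eq: newspace decomposition}. For a newform $f$ at level $m'$ with $T_l f = a_l f$ and $W_{(n,m')} f = \varepsilon f$, Lemma~\ref{lem: AL and level-raising} gives $(T_l\circ W_n)(\alpha_d f) = (d'/d)^{\frac{\kappa}{2}-1}\varepsilon a_l\,\alpha_{d'} f$ with $d' = \big(\tfrac{m}{dm'}, n\big)\big(d, \tfrac{m}{n}\big)$, and only the terms with $d'=d$ contribute to the trace, the scalar then being $1$. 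A prime-by-prime analysis of $d'=d$, using $(n,m/n)=1$ so that $v_p(n)=v_p(m)$ for $p \mid n$, shows that at primes $p \mid n$ the exponent $v_p(m/m')$ must be even and $v_p(d)$ is forced, while at primes $p \nmid n$ the value $v_p(d)$ is free. Summing the resulting diagonal entries over $d$ and over a newform basis yields the structural identity
\[
\Tr(T_l \circ W_n \mid S_{2k-2}(m)) = \sum_{m' \mid m} c_n(m/m')\, \Tr\big(T_l \circ W_{(n,m')} \mid S_{2k-2}(m')^{\new}\big),
\]
where $c_n(e) = \prod_{p \nmid n}(v_p(e)+1)\prod_{p \mid n}\delta_{2 \mid v_p(e)}$ factors as a divisor-count at primes $p \nmid n$ times a parity indicator at primes $p \mid n$.

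Finally I would verify that the stated Möbius combination reproduces this structural identity. The primes $p \nmid n$ contribute the standard oldform multiplicities, already correctly accounted for in \cite{SZ}; the correction lives entirely at the primes $p \mid n$, where inverting the parity relation produces the restriction that $m/m'$ be squarefree together with the weight $\mu\big(n/(n,m')\big)$ and the restricted index $(n,m')$. Concretely, I would express \cite{SZ}'s $s_{k,m'}(l,(n,m'))$ back in terms of newform traces and check, as an arithmetic identity in the Dirichlet ring (cf.\ the inversion carried out in Section~4 of the main text), that $\sum_{m' \mid m,\ m/m' \text{ squarefree}} \mu\big(n/(n,m')\big)\, s_{k,m'}(l,(n,m'))$ collapses to the right-hand side above; the parity indicators $\delta_{2 \mid v_p(e)}$ at primes $p \mid n$ are precisely what is encoded by $\mu\big(n/(n,m')\big)$.

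The main obstacle is twofold. The conceptual crux is the restriction $W_n \mapsto W_{(n,m')}$ and the bookkeeping of which copies $\alpha_d f$ are genuinely fixed, rather than merely permuted, by $T_l \circ W_n$; getting the parity condition and the forced value of $v_p(d)$ at primes $p \mid n$ exactly right is what distinguishes the corrected formula from the erroneous one. The technical crux is reconciling the explicit class-number expression defining $s_{k,m}(l,n)$ in \cite{SZ} with the trace-theoretic derivation above, so as to isolate the original error; I would pin this down by cross-checking against the already-correct companion formulas in the same block on p.~117 and against small numerical cases.
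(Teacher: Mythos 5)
Your proposal is correct and arrives at the same two-step strategy as the paper: first decompose $\Tr(T_l\circ W_n\mid S_{2k-2}(m))$ into newform traces $\Tr(T_l\circ W_{(n,m')}\mid S_{2k-2}(m')^{\new})$ with explicit multiplicities, then combine with the (correct) newform formula \eqref{eq: SZ new} from \cite{SZ} and collapse the result by Dirichlet-ring combinatorics. The one genuine difference is in the first step: the paper simply quotes the decomposition from \cite{SZ}*{\S 2.5} (reproduced as \eqref{eq: SZ 5}), whereas you re-derive it from the commutation relations of Section~\ref{sec: level raising}; your multiplicity $c_n(e)=\prod_{p\nmid n}(v_p(e)+1)\prod_{p\mid n}\delta_{2\mid v_p(e)}$ agrees exactly with the count $\delta_{n_1/a_1=\square}\cdot\sigma_0(n_2/a_2)$ implicit in \eqref{eq: SZ 5}, so this buys a self-contained proof at the cost of redoing a standard Atkin--Lehner--Li computation that the paper outsources. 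The second difference is only organizational: the paper substitutes \eqref{eq: SZ new} into \eqref{eq: SZ 5} and simplifies forward using the identities $\sum_{d\mid n,\ n/d=\square}\alpha(d)=\mu(n)$ and $\sum_{d\mid n}\alpha(d)\sigma_0(n/d)=\delta_{n\ \text{squarefree}}$, whereas you propose to invert \eqref{eq: SZ new} and verify the stated M\"obius combination against your structural identity; these are the same computation run in opposite directions, and you will need precisely those two $\alpha$-identities (which you gesture at but do not state) to close the argument. One minor slip: the scalar coming from Lemma~\ref{lem: AL and level-raising} is $(d'/d)^{1-\frac{\kappa}{2}}$ with $\kappa=2k-2$ the weight, not $(d'/d)^{\frac{\kappa}{2}-1}$; since only the terms with $d'=d$ contribute to the trace, this does not affect the conclusion.
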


\begin{proof}
    Our starting point is the equation \cite{SZ}*{\S 2.5} on page 133. Its derivation is explained in the half page above it, and can also be deduced from the rest of this paper, and is a standard application of Atkin-Lehner-Li theory. We recall the formula here.
    \begin{equation} \label{eq: SZ 5}
        \Tr(T_{l} \circ W_{n_1} | S_{2k-2}(n_1 n_2)) = 
        \sum_{\substack{a_1 \mid n_1 \\ n_1 / a_1 = \square}} \sum_{a_2 b_2 \mid n_2} \Tr(T_{l} \circ W_{a_1} | S_{2k-2} (a_1 a_2)^{\new}).
    \end{equation}
    We combine this formula with the formula obtained in \cite{SZ} for the trace on the new subspace, appearing right after the formula with the error in p. 117. Again, for convenience we recall it here.
    \begin{equation} \label{eq: SZ new}
        \Tr(T_{l} \circ W_n | S_{2k-2}(m)^{\new}) = 
        \sum_{m' \mid m} \alpha \left( \frac{m}{m'} \right) s_{k,m'} (l, (n,m')),
    \end{equation}
    where $\alpha(m)$ is the multiplicative arithmetic function such that $\alpha(p) = \alpha(p^2) = -1$, $\alpha(p^3) = 1$ and $\alpha(p^s) = 0$ for all $s \ge 4$.

    Combining equations \eqref{eq: SZ 5} and \eqref{eq: SZ new}, we get
    \begin{equation} \label{eq: SZ 5 + new}
        \Tr(T_{l} \circ W_n | S_{2k-2}(m))
        = \sum_{\substack{n' \mid n \\ n / n' = \square}} 
        \sum_{a_2 b_2 \mid \frac{m}{n}} 
        \sum_{m' \mid n' a_2} \alpha \left( \frac{n' a_2}{m'} 
        \right) s_{k,m'}(l, (n', m')).
    \end{equation}
    We note that $b_2$ does not modify the sums, hence summing over the possible $b_2$, we only count the number of divisors of $\frac{m}{n a_2}$. Write $\sigma_0(n)$ for the number of divisors of $n$, as in \cite{SZ}. In addition, since $\left(n, \frac{m}{n} \right) = 1$, we have also $(n', a_2) = 1$, so the divisors $m' \mid n' a_2$ are in bijection with pairs $(m_1, m_2)$ such that $m_1 \mid n'$ and $m_2 \mid a_2$. Moreover, $\alpha$ is multiplicative on coprime inputs, hence rewriting \eqref{eq: SZ 5 + new} we have
    \begin{align} \label{eq: SZ big sum}
    \nonumber
        &\Tr(T_{l} \circ W_n | S_{2k-2}(m)) \\ 
        &=\sum_{\substack{n' \mid n \\ n / n' = \square}} 
        \sum_{a_2 \mid \frac{m}{n}} \sigma_0 \left( \frac{m}{na_2} \right)
        \sum_{m_1 \mid n'}  \alpha \left( \frac{n'}{m_1} 
        \right) \sum_{m_2 \mid a_2} \alpha \left( \frac{ a_2}{m_2} 
        \right) s_{k,m_1 m_2}(l, m_1) \\
        \nonumber
        &= \sum_{m_1 \mid n} 
        \left( \sum_{\substack{m_1 \mid n' \mid n \\ n/n' = \square}} \alpha \left( \frac{n'}{m_1} \right)  \right)
        \sum_{m_2 \mid \frac{m}{n}} \left( 
        \sum_{r \mid a_2 \mid \frac{m}{n}} 
        \sigma_0 \left( \frac{m}{na_2} \right)
        \alpha \left( \frac{a_2}{m_2} \right)
        \right)
        s_{k, m_1 m_2} (l, m_1).
    \end{align}
    In order to simplify this expression, we note that 
    $$
    \sum_{\substack{d \mid n \\ n/d = \square}} \alpha(d) = \mu(n), \quad
    \sum_{d \mid n} \alpha(d) \sigma_0 \left( \frac{n}{d} \right) = \begin{cases}
        1 & n \text{ is squarefree} \\
        0 & \text{else}
    \end{cases},
    $$
    simplifying \eqref{eq: SZ big sum} to 
    \begin{equation}
        \Tr(T_{l} \circ W_n | S_{2k-2}(m)) 
        = \sum_{m_1 \mid n} \mu \left( \frac{n}{m_1} \right)
        \sum_{\substack{m_2 \mid \frac{m}{n} \\ 
        \frac{m}{m_2 n} \text{ squarefree}}} s_{k, m_1 m_2}(l, m_1).
    \end{equation}
    Finally, using again the bijection $m' \leftrightarrow (m_1, m_2)$ (and noting that $\mu$ is zero whenever the argument is not squarefree), we obtain
    $$
    \Tr(T_{l} \circ W_n | S_{2k-2}(m)) 
    = \sum_{\substack{m' \mid m \\ m/m' \text{ squarefree}}} \mu \left( \frac{n}{(n,m')} \right)
    s_{k,m'} (l ,(n,m')),
    $$
    as claimed.
\end{proof}

\subsection{Trace formula in \cite{P}}
While the trace formula appearing in \cite{P}*{Theorem 4} (Theorem~\ref{thm: Popa trace formula}) is correct, there seems to be an error in the formula for computing the quantity $C_N(u,t,n)$, defined in \eqref{eq: C_N}. Let us present the correct version of this formula \cite{P}*{Lemma 4.5}. The only difference is at $p = 2$, specifically when $N=u=2^a$. This error, however, does not impact the validity of the trace formula. Indeed, the difference presented happens specifically when $\frac{t^2 - 4n}{u^2} \bmod 4 \in \{2,3 \}$, hence $\frac{t^2-4n}{u^2}$ is not a discriminant, yielding $H \left(\frac{t^2-4n}{u^2} \right) = 0$ so that the sum in \cite{P}*{Theorem 4} is not affected by it.

\begin{lem} \label{lem: Popa fix}
    For any $N \ge 1$ and for $u \mid N, u^2 \mid t^2 - 4n$ we have 
    $$
    C_N(u,t,n) = |S_N(t,n)| \cdot C_N(u,t^2-4n),
    $$
    where $S_N(t,n) = \{\alpha \in (\Z / N \Z)^{\times} : \alpha^2 - t \alpha + n \equiv 0 \pmod N \}$, and the coefficients $C_N(u,D)$, defined for $u \mid N, u^2 \mid D$ are multiplicative in $(N,u)$, namely
    $$
    C_N(u,D) = \prod_{p \mid N} C_{p^{\nu_p(N)}} (p^{\nu_p(u)},D).
    $$
    If $N = p^a$ with $p \ne 2$ a prime and $a \ge 1$ we have $C_N(p^0, D) = 1, C_N(p^a,D) = p^{\lceil \frac{a}{2} \rceil}$, and setting $b = \nu_p(D)$ (with $b = \infty$ if $D = 0$), for $0 < i < a$ we have:
    \begin{equation} \label{eq: C_N base cases}
    C_N(p^i,D) = \begin{cases}
        p^{\lceil \frac{i}{2} \rceil} - p^{\lceil \frac{i}{2} \rceil - 1} & 1\le i \le b-a, i \equiv a \Mod 2 \\
        -p^{\lceil \frac{i}{2} \rceil - 1} & i = b - a + 1, i \equiv a \Mod 2 \\
        p^{\lfloor \frac{i}{2} \rfloor} \left( \frac{D/p^b}{p} \right) & i = b - a + 1, i \not \equiv a \Mod 2 \\
        0 & \text{ otherwise.}
    \end{cases}
    \end{equation}
    If $N=2^a$ with $a \ge 1$ and $b = \nu_p(D)$, then $C_N(2^0, D) = 1$ and 
    \begin{equation} \label{eq: C_N 2^a}
    C_N(2^a, D) = \begin{cases}
        2^{\lceil \frac{a}{2} \rceil} & b \ge 2a+2 \text{ or } b = 2a, D/2^b \equiv 1 \Mod 4 \\
        -2^{\lceil \frac{a}{2} \rceil - 1} & b = 2a+1 \text{ or } b = 2a, D/2^b \equiv -1 \Mod 4.
    \end{cases}
    \end{equation}
    For $0 < i < a$ we have:
    \begin{equation} \label{eq: C_N 2^i}
    C_N(2^i,D) = \begin{cases}
        2^{\lceil \frac{i}{2} \rceil - 1} & 1\le i \le b-a-2, i \equiv a \Mod 2 \\
        -2^{\lceil \frac{i}{2} \rceil - 1} & i = b - a - 1, i \equiv a \Mod 2 \\
        2^{\lceil \frac{i}{2} \rceil - 1} \epsilon_4 \left(\frac{D}{2^b} \right) & i = b - a, i \equiv a \Mod 2 \\
        2^{\lfloor \frac{i}{2} \rfloor} \left( \frac{D/2^b}{2} \right) & i = b - a + 1, i \not \equiv a \Mod 2, \frac{D}{2^b} \equiv 1 \Mod 4 \\
        0 & \text{ otherwise,}
    \end{cases}
    \end{equation}
    where $\left(\frac{\bullet}{p} \right)$ is the Legendre symbol, and $\epsilon_4$ is the nontrivial character mod $4$.
\end{lem}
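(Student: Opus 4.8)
The plan is to follow Popa's derivation of \cite{P}*{Lemma 4.5}, isolating the two ingredients that build up $C_N(u,t,n)$ and pinpointing where the prime $2$, and specifically the case $N=u=2^a$, diverges from the published formula. The starting observation is that completing the square converts the congruence $\alpha^2 - t\alpha + n \equiv 0$ into a statement about square roots of the discriminant $D = t^2 - 4n$: for any odd modulus $m$ the substitution $x = 2\alpha - t$ is a bijection between solutions of $\alpha^2 - t\alpha + n \equiv 0 \pmod m$ and solutions of $x^2 \equiv D \pmod{4m}$ (with the parity of $x$ fixed), while at the prime $2$ the same substitution still governs the count but requires separate bookkeeping. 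This is what makes the final answer depend on $(t,n)$ only through $D$.

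First I would prove the factorization $C_N(u,t,n) = |S_N(t,n)| \cdot C_N(u,D)$. The key point is that the count $|S_N(u,t,n)|$ of residues solving the congruence modulo $Nu$ decomposes, via the Chinese remainder theorem and the square-completion above, as the product of the count $|S_N(t,n)|$ of solutions modulo $N$ and a lifting factor recording how each such solution propagates to the larger modulus $Nu$. Since this lifting factor is controlled entirely by the valuations $\nu_p(D)$ together with quadratic-residue data of $D/p^{\nu_p(D)}$, it depends on $(t,n)$ only through $D$; substituting the product into $B_N$ and Möbius-inverting over $d \mid u$ pulls $|S_N(t,n)|$ out of the sum, leaving $C_N(u,D)$ by definition. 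Multiplicativity $C_N(u,D) = \prod_{p \mid N} C_{p^{\nu_p(N)}}(p^{\nu_p(u)},D)$ then follows from the Chinese remainder theorem, since both the square-root counts and the ratio $\varphi_1(N)/\varphi_1(N/u)$ are multiplicative in the prime-power components of $N$. This reduces the whole statement to the local computation at a single prime power $N = p^a$, $u = p^i$.

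The local computation is the heart of the argument. For odd $p$, writing $b = \nu_p(D)$, I would invoke the standard count of solutions of $x^2 \equiv D \pmod{p^j}$: there are none when $b<j$ is odd, there are $p^{b/2}\bigl(1 + \left(\frac{D/p^b}{p}\right)\bigr)$ when $b<j$ is even, and there are $p^{\lfloor j/2\rfloor}$ when $b \ge j$. Feeding these into $B_{p^a}(p^i,\cdot)$ and forming the Möbius difference $B_{p^a}(p^i,\cdot) - B_{p^a}(p^{i-1},\cdot)$ produces the telescoping value $p^{\lceil i/2\rceil} - p^{\lceil i/2\rceil-1}$ in the bulk range, the boundary correction $-p^{\lceil i/2\rceil-1}$ at $i = b-a+1$ with $i \equiv a \pmod 2$, and the Legendre-symbol term in the opposite parity case, matching \eqref{eq: C_N base cases} exactly; the extreme cases $i=0$ and $i=a$ are checked directly.

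The main obstacle, and the reason the correction is needed, is the prime $2$. Counting solutions of $x^2 \equiv D \pmod{2^j}$ is genuinely more delicate, since solvability and the number of roots depend on $D/2^b \bmod 8$ and on the parity of $b$; this is why both the character $\epsilon_4$ mod $4$ and the symbol $\left(\frac{\bullet}{2}\right)$ surface in \eqref{eq: C_N 2^i} and \eqref{eq: C_N 2^a}. I would carry out the Möbius inversion separately in the interior range $i<a$ and in the top case $i=a$, and it is precisely in the top case $N=u=2^a$ — where the boundary conditions $b=2a$, $b=2a+1$, and $b\ge 2a+2$ collide — that the published formula miscounts. Here I would verify \eqref{eq: C_N 2^a} by direct evaluation, and then note, as the remark preceding the lemma records, that the discrepancy arises only when $(t^2-4n)/u^2 \equiv 2,3 \pmod 4$; in that regime $(t^2-4n)/u^2$ is not a discriminant, so $H$ annihilates the corresponding term and the trace formula of Theorem~\ref{thm: Popa trace formula} is left unchanged.
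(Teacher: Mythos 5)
Your proposal follows essentially the same route as the paper's proof: reduce to the prime-power case via the multiplicativity argument taken verbatim from Popa (the ratio $c_N(p^i,D)$ depending on $(t,n)$ only through $D=t^2-4n$), then complete the square and count square roots of $D$ modulo prime powers, with the separate, more delicate bookkeeping at $p=2$ involving $\epsilon_4$ and the mod-$8$ condition, and a direct check of the boundary case $N=u=2^a$ where the published formula errs. The only difference is one of detail—the paper writes out the full tables of $|S_{p^a}(p^i,t,n)|$ and $|c_{p^a}(p^i,D)|$ before forming the M\"obius differences—but the underlying argument is the same.
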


\begin{proof}
    The proof of \cite{P}*{Lemma 4.5} works verbatim, showing that $C_N(u,D)$ is multiplicative and that if $N = p^a$ it equals
    \begin{equation} \label{eq: C_N from c_N}
    C_N(p^i,D) = c_N(p^i,D) - \delta_{i \ne 0} \cdot c_N(p^{i-1},D),
    \end{equation}
    where
    \begin{equation} \label{eq: c_N}
    c_N(p^i,D) = \frac{\varphi_1(p^a)/\varphi_1(p^{a-i}) \cdot |S_N(p^i, t, n)|}{|S_N(t,n)|},
    \end{equation}
    and $\varphi_1(N) = [\SL_2(\Z) : \Gamma_0(N)] = |\P^1(\Z / N \Z)| = N \prod_{p \mid N} (1 + 1/p)$. Here, $S_N(u,t,n)$ is defined as in \eqref{eq: S_N}. Note also that $\varphi_1(p^a) / \varphi_1(p^{a-i}) = p^i$ unless $i = a$, when it is $p^a + p^{a-1}$.
    It only remains to compute the cardinality of the set $S_{p^a}(p^i,t,n)$ for $0 \le i \le a$. Since $c_N$ only depends on $D = t^2 - 4n$, we may assume that $p \nmid t$ when $p \ne 2$, and that $4 \nmid t$ when $p = 2$. We may also assume that $S_N(t,n) \ne \emptyset$.

    We have $\alpha^2 - t\alpha+n \equiv 0 \Mod{Nu} $ if and only if $(2\alpha - t)^2 \equiv t^2-4n \Mod{4Nu}$. Writing $D = p^b D_0$ and $Nu = p^{a+i}$, we see that if $a+i \le b$ and $p \ne 2$, it is equivalent to $\alpha \equiv 2^{-1} t \Mod{p^{\lceil \frac{a+i}{2} \rceil}}$, hence $|S_{p^a}(p^i, t, n)| = p^{\lfloor \frac{a-i}{2} \rfloor}$ in this case. 
    When $p = 2$, this still holds if $a+i+2 \le b$, as $2 \mid t$.  
    
    Otherwise, we must have $2\nu_p(2\alpha - t) = b$, so if $2 \nmid b$, $|S_{p^a}(p^i,t,n)| = 0$, while if $2 \mid b$, $(p^{-b/2}(2\alpha-t))^2 \equiv D_0 \Mod{4p^{a+i-b}}$. As $p \nmid D_0$, if $b < a + i$, this equation has $1 + \left( \frac{D_0}{p} \right)$ solutions modulo $4p$, and by Hensel's Lemma, each of them lifts uniquely modulo $4p^{a+i-b}$, determining $\alpha$ modulo $4p^{a+i-b/2}$, hence if $b < a + i$, we have $|S_{p^a}(p^i,t,n)| = p^{\frac{b}{2}-i} \left(1 + \left( \frac{D_0}{p} \right) \right)$. 
    
    Therefore, for $p \ne 2$ we obtain
    $$
    \left |S_{p^a}(p^i, t, n) \right | = \begin{cases}
        p^{\lfloor \frac{a-i}{2} \rfloor} & i \le b - a \\
        0 & 2 \nmid b, b-a+1 \le i \\
        p^{\frac{b}{2}-i}\left(1 + \left( \frac{D/p^b}{p} \right) \right) & 2 \mid b, b-a+1 \le i,
    \end{cases}
    $$
    and using the case $i = 0$ for the denominator in \eqref{eq: c_N}, we get
    $$
    \left |c_{p^a}(p^i, D) \right| = \begin{cases}
        p^{\lceil \frac{a}{2} \rceil} + p^{\lceil \frac{a}{2} \rceil - 1} & i = a \\
        p^{\lfloor \frac{a+i}{2} \rfloor - \lfloor \frac{a}{2} \rfloor}  & i \le b - a, i \ne a \\
        0 & 2 \nmid b, b-a+1 \le i \\
        p^{\frac{b}{2}-\lfloor \frac{a}{2} \rfloor}\left(1 + \left( \frac{D/p^b}{p} \right) \right) & 2 \mid b, b-a+1 \le i, a \le b \\
        1 & 2 \mid b \le a - 1.
    \end{cases}
    $$
    We may now plug it in \eqref{eq: C_N from c_N} to obtain \eqref{eq: C_N base cases} as well as $C_N(p^a, D) = p^{\lceil \frac{a}{2} \rceil}$ for $i = a$, and $C_N(p^0, D) = 1$ for $i = 0$.

    When $p = 2$, one needs to replace $\left( \frac{D_0}{p} \right)$ by the equivalent statement that $D_0$ is a local square, i.e. $D_0 \equiv 1 \Mod{8}$. If $D_0 \equiv 1 \Mod{4}$, then they coincide, so we may write 
    $$|S_{2^a}(2^i, t, n)| = 2^{\frac{b}{2}-i - 1} \left (1 + \epsilon_4(D_0) \right) \left(1 + \left( \frac{D_0}{2} \right) \right).$$
    When $p = 2$ we also have to consider the cases $b = a+i$ and $b = a+i+1$. If $b = a + i$, the number of solutions modulo $4$ is $1 + \epsilon_4(D_0)$, yielding $$|S_{2^a}(2^i, t, n)| = 2^{\frac{b}{2}-i - 1} (1 + \epsilon_4(D_0))$$ in this case, while if $b = a + i + 1$, any $\alpha$ is a solution and we have $$|S_{2^a}(2^i, t, n)| = 2^{\frac{b}{2}-i-1}.$$
    Similarly, for $p = 2$ we obtain
    $$
    \left |S_{2^a}(2^i, t, n) \right | = \begin{cases}
        2^{\lfloor \frac{a-i}{2} \rfloor} & i \le b - a - 2 \\
        0 & 2 \nmid b, b-a-1 \le i \\
        2^{\frac{b}{2}-i-1} \left(1 + \epsilon_4 \left(\frac{D}{2^b} \right) \right) \left(1 + \left( \frac{D/2^b}{2} \right) \right) & 2 \mid b, b-a+1 \le i \\
        2^{\frac{b}{2}-i-1}\left(1 + \epsilon_4 \left(\frac{D}{2^b} \right) \right) & 2 \mid b, i = b - a \\
        2^{\frac{b}{2}-i-1} & 2 \mid b, i = b - a - 1,
    \end{cases}
    $$
    and using the case $i = 0$ for the denominator in \eqref{eq: c_N}, we get
    $$
    \left |c_{2^a}(2^i, D) \right| = \begin{cases}
        2^{\lceil \frac{a}{2} \rceil} + 2^{\lceil \frac{a}{2} \rceil - 1} & i = a, b \ge 2a + 2 \\
        \left( 2^{\lceil \frac{a}{2} \rceil-1} + 2^{\lceil \frac{a}{2} \rceil - 2} \right) \left(1 + \epsilon_4 \left(\frac{D}{2^b} \right) \right) & i = a, b = 2a \\
        2^{\lfloor \frac{a+i}{2} \rfloor - \lfloor \frac{a}{2} \rfloor}  & i \le b - a - 2, i \ne a \\
        0 & 2 \nmid b, b-a-1 \le i \\
        2^{\frac{b}{2}-\lfloor \frac{a}{2} \rfloor - 1} \left(1 + \epsilon_4 \left(\frac{D}{2^b} \right) \right)\left(1 + \left( \frac{D/2^b}{2} \right) \right) & 2 \mid b, 2 \le b-a+1 \le i \\
         1 + \left( \frac{D/2^b}{2} \right) & 2 \mid b, b-a+1 \le i, b=a \\
        2^{\frac{b}{2}-\lfloor \frac{a}{2} \rfloor - 1}\left(1 + \epsilon_4 \left( \frac{D}{2^b} \right) \right) & 2 \mid b, i = b - a, a+1 \le b \\
        2^{\frac{b}{2} - \lfloor \frac{a}{2} \rfloor - 1} & 2 \mid b, i = b - a - 1\\
        1 & 2 \mid b, i = b - a, b = a \\
        1 & 2 \mid b \le a - 1.
    \end{cases}
    $$
    We may now plug it in \eqref{eq: C_N from c_N} to obtain \eqref{eq: C_N 2^i} as well as \eqref{eq: C_N 2^a}.
\end{proof}

\begin{bibdiv}
\begin{biblist}

\bib{Arthur}{article}{
   author={Arthur, James},
   title={The trace formula in invariant form},
   journal={Ann. of Math. (2)},
   volume={114},
   date={1981},
   number={1},
   pages={1--74},
   issn={0003-486X},
   doi={10.2307/1971376},
}

\bib{repo}{article}{
    author={Assaf, Eran},
    title={Trace Formula},
    year={2023},
    publisher={GitHub},
    journal={GitHub repository},
    eprint={https://github.com/assaferan/TraceFormula},
    note={Last Accessed Nov 1 2023}
}

\bib{AL78}{article}{
   author={Atkin, A. O. L.},
   author={Li, Wen Ch'ing Winnie},
   title={Twists of newforms and pseudo-eigenvalues of $W$-operators},
   journal={Invent. Math.},
   volume={48},
   date={1978},
   number={3},
   pages={221--243},
   issn={0020-9910},
}

\bib{Cohen}{article}{
   author={Cohen, Henri},
   title={Trace des op\'{e}rateurs de Hecke sur $\Gamma \sb{0}(N)$},
   language={French},
   conference={
      title={S\'{e}minaire de Th\'{e}orie des Nombres (1976--1977)},
   },
   book={
      publisher={Centre National de Recherche Scientifique, Laboratoire de
   Th\'{e}orie des Nombres, Talence},
   },
   date={1977},
   pages={Exp. No. 4, 9},
}

\bib{Cowan23}{article}{
  title={Murmurations and explicit formulas},
  author={Cowan, Alex},
  journal={arXiv preprint, arXiv:2306.10425},
  eprint={https://arxiv.org/abs/2306.10425},
  year={2023}
}

\bib{DS}{book}{
   author={Diamond, Fred},
   author={Shurman, Jerry},
   title={A first course in modular forms},
   series={Graduate Texts in Mathematics},
   volume={228},
   publisher={Springer-Verlag, New York},
   date={2005},
   pages={xvi+436},
   isbn={0-387-23229-X},
}

\bib{Eichler}{article}{
   author={Eichler, Martin},
   title={On the class of imaginary quadratic fields and the sums of
   divisors of natural numbers},
   journal={J. Indian Math. Soc. (N.S.)},
   volume={19},
   date={1955},
   pages={153--180 (1956)},
   issn={0019-5839},
}

\bib{HLOP22}{article}{
  title={Murmurations of elliptic curves},
  author={He, Yang-Hui},
  author={Lee, Kyu-Hwan},
  author={Oliver, Thomas},
  author={Pozdnyakov, Alexey},
  journal={arXiv preprint, arXiv:2204.10140},
  eprint={https://arxiv.org/abs/2204.10140},
  year={2022},
}

\bib{Knapp}{article}{
   author={Knapp, A. W.},
   title={Theoretical aspects of the trace formula for ${\rm GL}(2)$},
   conference={
      title={Representation theory and automorphic forms},
      address={Edinburgh},
      date={1996},
   },
   book={
      series={Proc. Sympos. Pure Math.},
      volume={61},
      publisher={Amer. Math. Soc., Providence, RI},
   },
   isbn={0-8218-0609-2},
   date={1997},
   pages={355--405},
}

\bib{Lang}{book}{
   author={Lang, Serge},
   title={Introduction to modular forms},
   series={Grundlehren der mathematischen Wissenschaften [Fundamental
   Principles of Mathematical Sciences]},
   volume={222},
   note={With appendixes by D. Zagier and Walter Feit;
   Corrected reprint of the 1976 original},
   publisher={Springer-Verlag, Berlin},
   date={1995},
   pages={x+261},
   isbn={3-540-07833-9},
}

\bib{Magma}{article}{
title={The {M}agma algebra system. {I}. {T}he user language},
author={Bosma, Weib},
author={Cannon, John},
author={Playoust, Catherine},
journal={J. Symbolic Comput.},
volume={24},
date={1997},
note = {Computational algebra and number theory (London, 1993)},
number={3-4},
pages={235--265},
ISSN = {0747-7171},
eprint = {http://dx.doi.org/10.1006/jsco.1996.0125},
label = {Magma}
}

Computational algebra and number theory

\bib{Oesterle}{thesis}{
  title={Sur la trace des op{\'e}rateurs de Hecke},
  author={Oesterl{\'e}, Joseph},
  year={1977},
  type={Ph.D. Thesis},
  school={Universit{\'e} de Paris-Sud Centre d'Orsay}
}

\bib{Pari}{article}{
    title={PARI/GP version 2.14.0},
    author={The PARI Group},
    eprint={http://pari.math.u-bordeaux.fr/},
    school={Univ.\ Bordeaux},
    date={2022},
    label={PARI/GP},
    note={Last accessed Nov 1 2023}
}

\bib{P}{article}{
   author={Popa, Alexandru A.},
   title={On the trace formula for Hecke operators on congruence subgroups,
   II},
   journal={Res. Math. Sci.},
   volume={5},
   date={2018},
   number={1},
   pages={Paper No. 3, 24},
   issn={2522-0144},
   doi={10.1007/s40687-018-0125-5},
}

\bib{Selberg}{article}{
   author={Selberg, A.},
   title={Harmonic analysis and discontinuous groups in weakly symmetric
   Riemannian spaces with applications to Dirichlet series},
   journal={J. Indian Math. Soc. (N.S.)},
   volume={20},
   date={1956},
   pages={47--87},
   issn={0019-5839},
}

\bib{SZ}{article}{
   author={Skoruppa, Nils-Peter},
   author={Zagier, Don},
   title={Jacobi forms and a certain space of modular forms},
   journal={Invent. Math.},
   volume={94},
   date={1988},
   number={1},
   pages={113--146},
   issn={0020-9910},
   doi={10.1007/BF01394347},
}

\bib{Drew}{article}{
    author={Sutherland, Andrew V.},
    title={Murmurations of modular form {L}-functions},
    eprint={https://math.mit.edu/~drew/murmurations.html},
    year={2023},
    note={Last accessed Nov 1 2023}
}

\bib{Zagier}{article}{
   author={Zagier, Don},
   title={Traces des op\'{e}rateurs de Hecke},
   language={French},
   conference={
      title={S\'{e}minaire Delange-Pisot-Poitou, 17e ann\'{e}e: 1975/76,
      Th\'{e}orie des nombres: Fasc. 2},
   },
   book={
      publisher={Secr\'{e}tariat Math., Paris},
   },
   date={1977},
   pages={Exp. No. 23, 12},
}

\end{biblist}
\end{bibdiv}
\end{document}